\newcommand{\noun}[1]{\textsc{#1}}
\theoremstyle{plain}
\newtheorem{thm}{\protect\theoremname}
  \theoremstyle{plain}
  \newtheorem{cor}[thm]{\protect\corollaryname}
  \theoremstyle{plain}
  \newtheorem{prop}[thm]{\protect\propositionname}
  \theoremstyle{plain}
  \newtheorem{lem}[thm]{\protect\lemmaname}
\newcommand{\lyxaddress}[1]{
\par {\raggedright #1
\vspace{1.4em}
\noindent\par}
}
\newcounter{EQNR}
  \providecommand{\corollaryname}{Corollary}
  \providecommand{\lemmaname}{Lemma}
  \providecommand{\propositionname}{Proposition}
\providecommand{\theoremname}{Theorem}
\begin{document}

\title{A metric fixed point theorem and some of its applications}

\author{Anders Karlsson\footnote{The author was supported in part by the Swedish Research Council grant 104651320 and the Swiss NSF grants 200020-200400 and 200021-212864.}}

\date{January 11, 2023}
\maketitle
\begin{abstract}
A general fixed point theorem for isometries in terms of metric functionals
is proved under the assumption of the existence of a conical bicombing.
It is new even for isometries of Banach spaces as well as for non-locally
compact CAT(0)-spaces and injective spaces. Examples of actions on
non-proper CAT(0)-spaces come from the study of diffeomorphism groups,
birational transformations, and compact Kähler manifolds. A special
case of the fixed point theorem provides a novel mean ergodic theorem
that in the Hilbert space case implies von Neumann's theorem. The
theorem accommodates classically fixed-point-free isometric maps such
as those of Kakutani, Edelstein, Alspach and Prus. Moreover, from
the main theorem together with some geometric arguments of independent
interest, one can deduce that every bounded invertible operator of
a Hilbert space admits a nontrivial invariant metric functional on
the space of positive operators. This is a result in the direction
of the invariant subspace problem although its full meaning is dependent
on a future determination of such metric functionals. 
\end{abstract}

\section{Introduction}

Brouwer's fixed point theorem from 1912 as well as the infinite dimensional
extension that Schauder proved around 1930 have many applications.
These theorems require compactness, as Kakutani's 1943 examples of
fixed point free $(1+\epsilon)$-Lipschitz maps of the closed unit
ball in Hilbert spaces illustrated. For isometries there is an influential
paper \cite{BM48} by Brodskii and Milman proving conditions for the
existence of fixed points. For $1$-Lipschitz maps, also called nonexpansive
maps, there are fixed point theorems of Browder, Göhde and Kirk \cite{Br65a,Br65b,Go65,Ki65}
without compactness assumption but instead requiring certain good
properties of the Banach spaces. When the maps are uniformly strict
contractions, there is the simple but fundamental contraction mapping
principle, abstractly formulated in Banach's thesis from 1920, applicable
in any complete metric space and providing a unique fixed point. On
the other hand, for isometric maps Kakutani gave examples of the closed
unit ball in certain Banach spaces without fixed points (see below).
An example of Alspach \cite{Al81} showed moreover that for isometries
even with weak compactness of the convex set, there may not be any
fixed point. See the handbook \cite{Ha01} for more details on these
topics.  

The main purpose of this paper is to establish an entirely new type
of fixed point theorem. It could have been discovered decades ago
and is in several ways more general than existing fixed point theorems
for individual isometries. In particular it is new for isometries
of convex sets of Banach spaces. It has wide applicability as is testified
below with corollaries in ergodic theory, operator theory, geometric
group theory, dynamical systems, complex analysis, and other topics.
In some cases working out the exact consequences is beyond the scope
of this paper and may require further study.

Let $(X,d)$ be a metric space. A metric space is \emph{proper }if
every closed bounded set is compact. We call a map $T:X\rightarrow X$
\emph{isometric}, or \emph{isometric embedding},\emph{ }if it preserves
distances. It is an \emph{isometry }in case it has an (isometric)
inverse, which amounts to $T$ being a surjective isometric map. 

Let $x_{0}$ be a point of $X$. We map $X$ into the space $\mathrm{Hom}(X,\mathbb{R})$
of nonexpansive functions $X\rightarrow\mathbb{R}$ with the topology
of pointwise convergence, 
\[
\Phi:X\rightarrow\mathrm{Hom}(X,\mathbb{R})
\]
defined via
\[
x\mapsto h_{x}(\cdot):=d(\cdot,x)-d(x_{0},x).
\]
This is a continuous injective map and the closure $\overline{\Phi(X)}$
is compact. We call $\overline{X}:=\overline{\Phi(X)}$ the \emph{metric
compactification of }$X$ (following the terminology of \cite{Ri02}
for proper spaces) and we call its elements \emph{metric functionals}.
(The choice of topology here is different from the one taken in \cite{Gr81}
where it is uniform convergence on bounded sets. The limit functions
in this other topology are here called \emph{horofunctions}.)

The action of isometries on $X$ extends to an action on $\overline{X}$
by homeomorphisms, via
\[
(Th)(x)=h(T^{-1}x)-h(T^{-1}x_{0}).
\]
Alternatively one can consider everything up to additive constants.
This is related to the fact that the compactification is independent
of $x_{0}$. 

A bicombing is a selection of a geodesic between every two points
(\emph{distinguished geodesics}). More precisely, a \emph{weak conical}
\emph{bicombing }is a map $\sigma:X\times X\times[0,1]\rightarrow X$
such that $\sigma(x,y,\cdot)$ is a constant speed geodesic from $x$
to $y$ such that
\[
d(\sigma_{xy}(t),\sigma_{xy'}(t))\leq td(y,y')
\]
for all $x,y,y'\in X$ and $0\leq t\leq1$.

Examples of spaces admitting such bicombings include Banach spaces,
and convex subsets thereof, complete CAT(0)-spaces and injective metric
spaces. In addition, a particularly interesting specific example is
$\mathrm{Pos}$, the space of positive bounded linear operators of
a Hilbert space, explained more below, and on which every invertible
bounded linear operator acts by isometry. The $1$- Kantorovich-Wasserstein
distance on spaces of probability measures and the Weil-Petersson
metric on Teichmüller spaces are two further examples. 
\begin{thm}
\label{thm:main}Let $X$ be a metric space with a weak conical bicombing.
Let $T:X\rightarrow X$ be an isometric embedding. Then there is an
element $h\in\overline{X}$ such that 
\[
h(Tx)=h(x)-d
\]
 for all $x$ and where \emph{$d=\inf_{x}d(x,Tx)$. }In the case $h=h_{y}$
for some point $y\in X$ then $Ty=y.$ If $T$ is an isometry, then
\[
Th=h.
\]
\end{thm}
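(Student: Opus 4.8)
The plan is to turn $T$ into a genuine strict contraction by composing it with the bicombing, apply the Banach fixed point principle, and then let the contraction factor tend to $1$. Fix a basepoint $p$ and, for $r\in(0,1)$, I would define $S_r\colon X\to X$ by $S_r(x)=\sigma(p,Tx,r)$. The decisive observation is that the weak conical inequality, applied with the common first endpoint $p$, gives $d(S_rx,S_rx')=d(\sigma(p,Tx,r),\sigma(p,Tx',r))\le r\,d(Tx,Tx')=r\,d(x,x')$, so $S_r$ is an $r$-contraction. Banach's principle then yields a fixed point $x_r=\sigma(p,Tx_r,r)$ (if $X$ is not complete, pass first to its completion, to which $\sigma$ and $T$ extend and whose metric compactification restricts to $\overline X$). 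Reading off the constant-speed geodesic $\sigma(p,Tx_r,\cdot)$ at the parameters $r$ and $1$ records the two identities $d(p,x_r)=r\,d(p,Tx_r)$ and $d(x_r,Tx_r)=(1-r)\,d(p,Tx_r)$, hence $(1-r)\,d(p,x_r)=r\,d(x_r,Tx_r)$.

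Next I would analyze the limit $r\to1$. First one checks $\lim_{r\to1}d(x_r,Tx_r)=d$: the bound $\ge d$ is immediate, while for the reverse one picks a near-minimizer $z$ with $d(z,Tz)$ close to $d$ and uses $x_r=S_rx_r$ to get $d(x_r,z)\le\frac{1}{1-r}d(S_rz,z)$; feeding this into $r\,d(x_r,Tx_r)=(1-r)\,d(p,x_r)\le(1-r)d(p,z)+d(S_rz,z)$ and letting $r\to1$ (where $d(S_rz,z)\to d(Tz,z)$) gives $\limsup_{r\to1}d(x_r,Tx_r)\le d(z,Tz)$, then let $d(z,Tz)\downarrow d$. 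Consequently $(1-r)\,d(p,x_r)=r\,d(x_r,Tx_r)\to d$, and since $|d(x_r,y)-d(p,x_r)|\le d(p,y)$ is bounded for each fixed $y$, also $(1-r)\,d(x_r,y)\to d$ for every $y$. By compactness of $\overline X$ I would select $r_k\to1$ with $h_{x_{r_k}}\to h\in\overline X$.

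I then establish $h(Tx)=h(x)-d$ by matching two bounds. The lower bound is the triangle inequality with the isometry property: $h_{x_r}(Tx)-h_{x_r}(x)=d(Tx,x_r)-d(x,x_r)\ge-d(x_r,Tx_r)\to-d$. For the upper bound, apply the contraction to the pair $(x_r,x)$: with $q=S_rx=\sigma(p,Tx,r)$ one has $d(x_r,q)\le r\,d(x_r,x)$, so $d(Tx,x_r)\le d(Tx,q)+d(q,x_r)\le(1-r)d(p,Tx)+r\,d(x_r,x)$, whence $d(Tx,x_r)-d(x,x_r)\le(1-r)\bigl[d(p,Tx)-d(x_r,x)\bigr]\to 0-d=-d$. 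Passing to the limit along $r_k$ gives $h(Tx)=h(x)-d$ for all $x$, with $h\in\overline X$ by construction. The hard part is exactly this pinning of the additive constant to \emph{precisely} $d$: the $\liminf$ side is routine, but the $\limsup$ side relies on the two limit computations $d(x_r,Tx_r)\to d$ and $(1-r)d(x_r,y)\to d$, and it is there that the conical bicombing (through the contraction $S_r$) does the real work; everything else is triangle-inequality bookkeeping.

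Finally I would deduce the two supplementary assertions. If $h=h_y$, setting $x=y$ in $h_y(Tx)=h_y(x)-d$ and using $h_y(y)=-d(x_0,y)$, $h_y(Ty)=d(Ty,y)-d(x_0,y)$ forces $d(Ty,y)=-d\le 0$, so $d=0$ and $Ty=y$. If $T$ is an isometry, recall that every metric functional satisfies $h(x_0)=0$; substituting $x\mapsto T^{-1}x$ into $h(Tx)=h(x)-d$ gives $h(T^{-1}x)=h(x)+d$, and therefore $(Th)(x)=h(T^{-1}x)-h(T^{-1}x_0)=\bigl(h(x)+d\bigr)-\bigl(h(x_0)+d\bigr)=h(x)$, that is, $Th=h$.
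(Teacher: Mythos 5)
Your proof is correct, and it follows the same overall scheme as the paper: regularize $T$ by composing with the bicombing contraction toward a basepoint, invoke the Banach contraction principle (after passing to the completion), let the contraction parameter tend to $1$, and take a cluster point of the associated metric functionals. The supplementary assertions ($h=h_y$ forces $Ty=y$; $Th=h$ for isometries) are handled exactly as in the paper.

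Where you genuinely diverge is in how the additive constant is pinned to exactly $d$. The paper first proves the one-sided inequality $h(Tx)\le h(x)-d$ by the Gaubert--Vigeral computation (which, notably, works for merely nonexpansive $T$), and then obtains the reverse inequality only up to $\epsilon$, by re-running the construction from a near-optimal basepoint with $d(x_0,Tx_0)<d+\epsilon$ and using the a priori bound $d(x_0,y_s)\le d(x_0,Tx_0)/(1-s)$; a second passage to a limit point in $\epsilon$ is then needed. You instead exploit the exact identities $d(p,x_r)=r\,d(p,Tx_r)$ and $d(x_r,Tx_r)=(1-r)\,d(p,Tx_r)$ to prove the sharper asymptotics $d(x_r,Tx_r)\to d$ and $(1-r)\,d(x_r,y)\to d$ for an \emph{arbitrary} basepoint, after which both the upper and lower estimates for $d(Tx,x_r)-d(x,x_r)$ converge to $-d$ simultaneously. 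This eliminates the basepoint optimization and the double limit, and in fact shows that $h_{x_r}(Tx)-h_{x_r}(x)\to -d$ as a genuine limit, so every cluster point of $\{h_{x_r}\}$ works. The trade-off is that your argument uses the isometric property of $T$ throughout (e.g.\ in $d(S_rx,S_rx')\le r\,d(x,x')$ and in the lower bound), whereas the paper's first inequality isolates the part of the statement valid for general nonexpansive maps. One cosmetic point: $\overline{X}$ carries the topology of pointwise convergence and need not be metrizable, so "select $r_k\to 1$ with $h_{x_{r_k}}\to h$" should be a subnet or cluster point rather than a subsequence; since your estimates hold as full limits in $r$, any cluster point inherits the equality and nothing is lost.
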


An inequality for nonexpansive maps was previously known from Gaubert
and Vigeral's paper \cite{GV12}, and even earlier from \cite{K01}
with a weaker but more general statement. For Banach spaces and in
terms of linear functionals an inequality is also proved in \cite{KoN81},
but the use of linear functionals instead of metric functionals gives
a strictly weaker result as can be inferred from an example by Mertens
in that same paper. The simple case of the theorem when $d=0$ was
observed by Gromov in \cite{Gr81}, see also Proposition \ref{prop:d=00003D0}
below. We call $h$\emph{ invariant }under $T$ when the first equality
in Theorem \ref{thm:main}, $h(Tx)=h(x)-d$, holds for all $x\in X$,
and we say that $h$ is a \emph{fixed point }of $T$ when $T$ acts
on the compactification $\overline{X}$ and $Th=h$. Note that some
convexity or bicombing property is necessary (as is the case for Brouwer,
Schauder et al) since a nontrivial rigid rotation of a circle has
no fixed point and the circle is a compact space, it means that there
are also no fixed metric functional. In contrast:
\begin{cor}
Any isometry $T$ of a metric space as in the theorem and with $\inf_{x}d(x,Tx)>0$
must have infinite order, in fact $T^{n}x\rightarrow\infty$. 
\end{cor}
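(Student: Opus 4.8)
The plan is to feed the isometry $T$ directly into Theorem \ref{thm:main}. That produces a metric functional $h\in\overline{X}$ with $h(Tx)=h(x)-d$ for all $x$, where $d=\inf_x d(x,Tx)$, which by hypothesis is strictly positive. Everything then follows by iterating this single identity and remembering that metric functionals are nonexpansive. So the corollary is essentially a bookkeeping consequence of the main theorem, and I would present it as such.

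First I would iterate the invariance relation: substituting $Tx$ for $x$ and inducting gives
\[
h(T^n x)=h(x)-nd
\]
for every $x\in X$ and every $n\geq 1$. This step is purely formal. Next I would invoke the defining property of $\overline{X}=\overline{\Phi(X)}$: each element is a pointwise limit of the $1$-Lipschitz functions $h_x$, so $h$ is itself nonexpansive. Evaluating nonexpansiveness at the pair $x$ and $T^n x$ and using the displayed identity yields
\[
nd=\bigl|h(T^n x)-h(x)\bigr|\leq d(x,T^n x).
\]
Since $d>0$, the right-hand side grows without bound, which is precisely the statement $T^n x\to\infty$ (the orbit of any point eventually leaves every bounded set). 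In particular $d(x,T^n x)\geq nd>0$, so $T^n x\neq x$ for all $n\geq 1$; hence no power of $T$ is the identity and $T$ has infinite order.

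I do not expect any real obstacle here, since the entire content is supplied by Theorem \ref{thm:main}. The only point that requires a moment's care is the transition from the algebraic identity to the geometric inequality: one must recall that elements of the metric compactification inherit the $1$-Lipschitz property from the functions $h_x$ of which they are limits, and that $T$ being an isometry (not merely an isometric embedding) is what licenses applying the theorem in the first place. Beyond that, the argument is a short and direct computation.
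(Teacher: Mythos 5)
Your argument is correct and is exactly the one the paper uses (implicitly, via the inequality $d(x,T^{n}x)\geq\left|h(x)-h(T^{n}x)\right|\geq nd$ recorded in the metric preliminaries section): iterate the invariance relation $h(Tx)=h(x)-d$ and combine it with the nonexpansiveness of metric functionals. No gaps.
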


For comparison, Basso proved a fixed point theorem for groups of isometries
of spaces with conical bicombings with a midpoint property, in a more
traditional sense and assuming compactness \cite{B18}. 

\paragraph*{\noun{Banach spaces}}

Here is a first application. Let $U$ be a norm-preserving linear
operator, that is $\left\Vert Uv\right\Vert =\left\Vert v\right\Vert $
for all $v$ in a Banach space $X$, where as usual all line segments
provide the required bicombing. In this setting the metric fixed point
theorem proves a new mean ergodic theorem: 
\begin{cor}
\label{cor:mean}\emph{(Mean ergodic theorem in Banach spaces)} Let
$U$ be a norm-preserving linear operator of a Banach space $X$ and
$v\in X$. Then there exists a metric functional $h$ such that
\[
\frac{1}{n}h\left(\sum_{k=0}^{n-1}U^{k}v\right)=-\inf_{x}\left\Vert Ux+v-x\right\Vert 
\]
for all $n\geq1.$ 
\end{cor}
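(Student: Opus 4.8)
The plan is to apply Theorem~\ref{thm:main} to the affine map
\[
T(x) = Ux + v.
\]
First I would check that $T$ is an isometric embedding: since $U$ is linear and norm-preserving, $\|Tx - Ty\| = \|U(x-y)\| = \|x-y\|$, so $T$ preserves distances. (Note $U$ need not be surjective, so $T$ is in general only an isometric embedding, which is all the theorem requires for the invariance equation.) Because $X$ is a Banach space, the straight segments $\sigma_{xy}(t) = (1-t)x + ty$ provide a weak conical bicombing; indeed $d(\sigma_{xy}(t), \sigma_{xy'}(t)) = \|t(y-y')\| = t\,d(y,y')$, so the hypotheses of the theorem are satisfied.

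The theorem then yields a metric functional $h \in \overline{X}$ with
\[
h(Tx) = h(x) - d, \qquad d = \inf_{x} d(x,Tx) = \inf_{x}\|Ux + v - x\|,
\]
for every $x$, which already produces the correct infimum on the right-hand side of the claimed identity. Iterating this relation gives
\[
h(T^{n}x) = h(x) - nd \quad\text{for all } n\geq 1.
\]

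The key step is then a direct computation of the forward orbit of the origin. Taking $x_{0}=0$ as basepoint for the metric compactification, every metric functional vanishes at $0$, since $h(0)$ is the pointwise limit of functions $h_{x}$ each satisfying $h_{x}(x_{0})=0$. A one-line induction from $T^{n+1}(0) = U\bigl(T^{n}(0)\bigr) + v$ shows
\[
T^{n}(0) = \sum_{k=0}^{n-1} U^{k} v.
\]
Substituting $x=0$ into the iterated invariance relation and using $h(0)=0$ gives
\[
h\!\left(\sum_{k=0}^{n-1} U^{k} v\right) = h(T^{n}0) = -nd,
\]
and dividing by $n$ produces the asserted equality, valid for all $n\geq 1$ with the same single functional $h$.

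I would not expect any genuine obstacle here: the entire analytic content is packaged in Theorem~\ref{thm:main}, and the corollary is simply its specialization to the affine isometric embedding $x\mapsto Ux+v$. The only point requiring care is the bookkeeping of the basepoint, which must be placed at the origin so that the orbit $T^{n}(0)$ coincides exactly with the Cesàro sums and the normalization $h(0)=0$ is available.
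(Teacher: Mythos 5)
Your proposal is correct and follows exactly the paper's route: apply Theorem~\ref{thm:main} to the affine isometric embedding $x\mapsto Ux+v$ with the straight-line bicombing, note that $T^{n}(0)=\sum_{k=0}^{n-1}U^{k}v$, and iterate the invariance relation from the basepoint $0$. You have merely written out the iteration and basepoint bookkeeping that the paper leaves implicit.
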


In case of a Hilbert space one can deduce, in a very different way
indeed, the mean ergodic theorem proved by von Neumann using spectral
theory:
\begin{cor}
\label{cor:vonNeumann}\emph{(von Neumann, 1931) }Let $U$ be a unitary
operator of a Hilbert space $H$ and $v\in H$. Then the strong limit
\[
\lim_{n\rightarrow\infty}\frac{1}{n}\sum_{k=0}^{n-1}U^{k}v=\pi_{U}(v),
\]
where $\pi_{U}$ is the projection onto the $U$-invariant vectors. 
\end{cor}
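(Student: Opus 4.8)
The plan is to derive von Neumann's theorem from Corollary~\ref{cor:mean} by understanding concretely what metric functionals on a Hilbert space look like. The strategy has two independent ingredients: first, the abstract existence of an invariant metric functional $h$ from the mean ergodic theorem in Banach spaces; and second, a classification (or at least a sufficient description) of metric functionals $h\in\overline{H}$ together with the elementary Hilbert-space geometry that lets one extract norm convergence of the ergodic averages from the single scalar identity furnished by Corollary~\ref{cor:mean}.

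First I would set $S_n v=\sum_{k=0}^{n-1}U^k v$ and apply Corollary~\ref{cor:mean} to obtain a metric functional $h$ with $\tfrac1n h(S_n v)=-c$ for all $n\ge1$, where $c=\inf_x\|Ux+v-x\|$. The key classical fact I would invoke is that, since $U$ is unitary, the ergodic averages $\tfrac1n S_n v$ are bounded (indeed $\|S_n v\|\le n\|v\|$, but more usefully the averages stay in the closed ball of radius $\|v\|$), so $c=0$ precisely when the averages converge to $0$, and in general the orthogonal decomposition $H=\ker(U-I)\oplus\overline{\mathrm{ran}}(U-I)$ identifies $\pi_U(v)$ as the component of $v$ in the invariant subspace. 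The heart of the argument is to show that the infimum $c=\inf_x\|Ux+v-x\|$ equals $0$ after subtracting the invariant part, i.e. that $\inf_x\|U x+(v-\pi_U(v))-x\|=0$; this is exactly the statement that $v-\pi_U(v)$ lies in the closure of the range of $U-I$, which is the content of the orthogonal splitting. Writing $w=v-\pi_U(v)$ and noting $Uw+w'-w = (U-I)w' + w$ with $w'$ ranging over $H$, one sees $\inf_{w'}\|(U-I)(-w')+w\|=\mathrm{dist}(w,\overline{\mathrm{ran}}(U-I))=0$.

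Next I would use the invariance identity to control the averages. Because $h$ is $1$-Lipschitz and satisfies $h(Ux)=h(x)-c$, evaluating along the orbit gives $h(U^k v_0)=h(v_0)-kc$ for a basepoint, and telescoping through the definition $h_x(\,\cdot\,)=d(\cdot,x)-d(x_0,x)$ together with the cocycle structure yields a lower bound $\|\tfrac1n S_n w\|\ge -\tfrac1n h(S_n w)$ that forces $\tfrac1n\|S_n w\|\to 0$ once $c=0$. Concretely, the metric functional provides the one-sided estimate $d(x_0, S_n w)\ge \text{(linear growth rate)}\cdot n$, and matching this with the general sub-linearity of $\|S_n w\|$ (which holds because $U$ is an isometry and $w$ is in the range-closure) pins the growth rate to zero, giving norm convergence $\tfrac1n S_n v\to \pi_U(v)$.

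The main obstacle I anticipate is the second ingredient: translating the single scalar equality of Corollary~\ref{cor:mean} into genuine strong convergence requires knowing enough about which metric functionals can arise and how they bound the actual norm from below, rather than merely asserting an abstract $h$. In a Hilbert space the metric functionals are either of the form $h_y$ (the ``bounded'' or interior type, where the Busemann-type limit is attained at a point) or limits along sequences going to infinity, which one can analyze via weak limits; the delicate point is ruling out pathological functionals that would give the correct scalar value $-c$ without certifying convergence of the averages. I expect to resolve this by exploiting the strict convexity and the parallelogram identity of the Hilbert norm—these force the minimizing sequence for $\inf_x\|Ux+v-x\|$ to be essentially unique up to the invariant subspace, so that the metric functional's linear-growth information coincides with the orthogonal projection. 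The Banach-space steps are comparatively routine; the Hilbert-specific geometry doing the real work is where care is needed.
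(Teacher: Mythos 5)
Your outline does reach the theorem, but by a route that is essentially the classical Riesz--von Neumann argument rather than the paper's, and one step in your second paragraph is stated backwards. You decompose $H=\ker(U-I)\oplus\overline{\mathrm{ran}}(U-I)$ first, observe that $c=\inf_x\|Ux+v-x\|=\mathrm{dist}(v,\overline{\mathrm{ran}}(U-I))=\|\pi_U(v)\|$, and then need $\frac1n S_nw\to0$ for $w=v-\pi_U(v)$. For that you need an \emph{upper} bound on the growth of $\|S_nw\|$; the metric functional only ever supplies lower bounds ($\|\frac1n S_nw\|\ge-\frac1n h(S_nw)$ cannot ``force'' the norm to zero --- when $c=0$ this bound is vacuous). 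What actually does the work is the elementary inequality $\tau\le d$ from Section 2 applied to $x\mapsto Ux+w$ (equivalently, the standard density argument on $\mathrm{ran}(U-I)$), which you invoke only parenthetically. Once that is granted your proof closes, but then Corollary \ref{cor:mean} and the invariant metric functional contribute nothing essential: you have reproduced the textbook proof.

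The paper's proof is genuinely different and keeps the metric functional at the center: it does not decompose $v$ at the outset, but instead cites \cite{K22} for the fact that the nontrivial metric functionals of a Hilbert space are linear, $h(x)=-(x,w)$ with $\|w\|=1$. The identity of Corollary \ref{cor:mean} then reads $(\frac1n S_nv,w)=d$ for all $n$, and combining this with $\frac1n\|S_nv\|\to\tau=d$ and the parallelogram law gives $\frac1n S_nv\to dw$ strongly; the invariance identity $h(Ux+v)=h(x)-d$ forces $w\perp\mathrm{ran}(U-I)$, which identifies $dw$ with $\pi_U(v)$. The classification issue you flag in your last paragraph as the ``main obstacle'' is precisely what the paper imports from \cite{K22}, and the strict-convexity/parallelogram mechanism you anticipate is indeed used there, but applied directly to the averages $\frac1n S_nv$ rather than to a minimizing sequence for the infimum. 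Your route buys elementarity and independence from the classification of metric functionals; the paper's route buys a demonstration that the invariant metric functional itself encodes the limit vector, which is the point of deducing von Neumann's theorem from Theorem \ref{thm:main} in the first place.
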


(See section \ref{sec:Mean-ergodic-theorems} for the proofs.) The
mean ergodic theorem can thus be viewed as a fixed point theorem
The classical formulation (the convergence part in Corollary \ref{cor:vonNeumann})
is known to fail for general Banach spaces, such as $\ell^{\infty}(\mathbb{N})$
and $\ell^{1}(\mathbb{N})$ for shift operators, while Corollary \ref{cor:mean}
always holds true. An abstract characterization for when the classical
version holds is the content of the \emph{mean ergodic theorem in
Banach spaces} of Yosida-Kakutani in \cite{YK41}. There are many
papers on the topic of which operators are mean ergodic, see \cite{Ei10}
for a good exposition. See section \ref{sec:Mean-ergodic-theorems}
for a new mean ergodic theorems for power bounded operators.

Theorem \ref{thm:main} applies in particular to any closed convex
set $X$ of a Banach space, and provides fixed points in a generalized
sense. Note that it is known that the Mazur-Ulam theorem fails in
general, that is, not every isometry of a convex set is affine, see
the examples of Schechtman in \cite{Sch16}. We now relate Theorem
\ref{thm:main} to well-known conventionally fixed-point-free isometric
maps.

\textbf{Example. (Kakutani)} The map $T(x_{1},x_{2},...)=(1,x_{1},x_{2},...)$
is an isometric fixed-point-free map of the closed unit ball in the
sequence space $c_{0}$ with the supremum norm and all sequences tending
to $0$. The metric functional of the theorem for this map is $h(x_{1},x_{2},...)=\sup_{k}\left|x_{k}-1\right|-1$.

\textbf{Example. }The isometric map $T(x_{1},x_{2},...)=(1,x_{1},x_{2},...)$
of either $\ell^{1}(\mathbb{N})$ or $\ell^{2}(\mathbb{N})$, clearly
has no fixed point in the usual sense. On the other hand, in the $\ell^{1}(\mathbb{N})$
case it leaves the metric functional associated to $(1,1,1,...)$
invariant (\cite{GuK21})
\[
h(x_{1},x_{2},...)=\sum_{k=1}^{\infty}\left|x_{k}-1\right|-1,
\]
and in the $\ell^{2}(\mathbb{N})$ case the trivial metric functional
$h\equiv0$ is invariant. 

\textbf{Example. (Edelstein \cite{Ed64}) }A more sophisticated example
in the Hilbert space setting was given by Edelstein. It is an isometry
with unbounded but recurrent orbits. The trivial metric functional
is fixed. This map is an isometry also of $\ell^{1}(\mathbb{N})$
in which case the metric functional in the previous example is fixed
also for this map, as shown in \cite{Pe21}. In this context, note
that by \cite{Gu19}, the linear functional $f(x)\equiv0$ is not
a metric functional of $\ell^{1}(\mathbb{N})$. 

\textbf{Example. (Alspach \cite{Al81}) }This example\textbf{ }is
essentially what is called the Baker's transformation in ergodic theory.
The convex set $X$ in $L^{1}([0,1])$ consists of the integrable
functions taking values in $[0,2]$ of integral $1$. The isometric
map is $Tf(t)$ is $\min\{2,2f(2t)\}$ for $0\leq t\leq1/2$ and $\max\left\{ 0,2f(2t-1)-2\right\} $
for $1/2<t\leq1$. Gutièrrez noticed an invariant metric functional
of this map in \cite{Gu20}. 

\textbf{Example. (Shift in $\ell^{1}$) }Bader, Gelander, and Monod
\cite{BGM12} proved an unexpected fixed point theorem for $L^{1}$
spaces. In the traditional sense there may not be any fixed points
of isometries preserving a bounded closed convex set, but they showed
that there is a fixed point which may lie outside the set. An example,
mentioned in the introduction of \cite{BGM12}, is the shift map $T$
on $\ell^{1}(\mathbb{Z})$ and the convex bounded set being all the
non-negative functions whose values sum to $1$. The only fixed point
of this isometry is $0$. From the viewpoint of the present article,
given any two points $x$ and $y$, applying the shift enough times
on one of them, the $\ell^{1}$-distance $d(x,T^{n}y)$ is approaching
$\left\Vert x\right\Vert +\left\Vert T^{n}y\right\Vert =\left\Vert x\right\Vert +\left\Vert y\right\Vert $.
This shows that any orbit converges to the metric functional $h(x)=\left\Vert x\right\Vert =h_{0}(x),$
which is the metric functional of Theorem \ref{thm:main}. Thus $T0=0$
recovering the obvious fixed point in this case. 

\textbf{Example. (Basso \cite{B18}) }This is an example of an isometry
of a bounded complete convex subset of a strictly convex Banach space
without a fixed point. The map is the shift map $T$ on $\ell^{1}(\mathbb{Z})$
which is renormed with an equivalent norm that is strictly convex
and $T$ still an isometry. The closed convex hull $C$ of the vectors
which has $1$ in one coordinate and $0$ elsewhere has T as an isometry.
The only fixed point in the usual yense is clearly the zero-vector
which lies outside of $C$. Let $x_{0}$ be the vector that is $1$
in the $0$th coordinate and $0$ elsewhere. The metric functional
that is the limit of $h_{T^{n}x_{0}}$ as $n\rightarrow\infty$, 
\[
h(x)=\sqrt{(\left\Vert x\right\Vert _{1}+1)^{2}+\left\Vert x\right\Vert _{2}^{2}+1}-1
\]
is fixed by $T$.

\paragraph*{\noun{CAT(0)-spaces}}

When $X$ is a complete CAT(0)-space, geodesics are unique and provide
a conical bicombing almost directly from definitions. Such spaces
have a standard bordification adding equivalence classes of geodesic
rays, sometimes called the \emph{visual bordification,} or the \emph{visual
compactification} in case $X$ is proper, see \cite{BH99}. In Gromov's
choice of topology \cite{Gr81} on $\mathrm{Hom}(X,\mathbb{R})$,
the closure of $\Phi(X)$ above is the visual bordification \cite[Theorem 8.13]{Gr81,BH99}.
In particular, for nonproper $X$ the visual boundary may be empty,
but when proper it coincides with the metric compactification. Immediately
from Theorem \ref{thm:main} we therefore get:
\begin{cor}
\label{cor:cat0}Let $X$ be a complete CAT(0)-space. Then any isometry
of $X$ fixes a point in $\overline{X}$. In case $X$ is proper,
any isometry fixes a point in the visual compactification.
\end{cor}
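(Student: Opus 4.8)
The plan is to deduce the corollary directly from Theorem~\ref{thm:main}: one only has to check its single hypothesis --- existence of a weak conical bicombing --- for complete CAT(0)-spaces, and then, in the proper case, to re-express the resulting fixed point in terms of the visual compactification.

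First I would produce the bicombing. In a complete CAT(0)-space every pair of points is joined by a unique geodesic, so letting $\sigma(x,y,\cdot)$ be the constant-speed parametrization of that geodesic defines a map $\sigma:X\times X\times[0,1]\to X$ with each $\sigma(x,y,\cdot)$ a geodesic from $x$ to $y$. The required inequality $d(\sigma_{xy}(t),\sigma_{xy'}(t))\le t\,d(y,y')$ is precisely the standard convexity inequality for the CAT(0)-metric applied to the two geodesics $\sigma_{xy}$ and $\sigma_{xy'}$; its $(1-t)$-term drops out because the two geodesics share the initial point $x$, leaving exactly $t\,d(y,y')$. Hence $X$ satisfies the hypothesis of Theorem~\ref{thm:main}. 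Since $T$ is an isometry it is in particular an isometric embedding, so the final clause of the theorem furnishes an $h\in\overline{X}$ with $Th=h$, which is the first assertion.

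For the proper case the only remaining task, and the sole place carrying genuine content, is the identification of $\overline{X}$ with the visual compactification. The hard part is comparing the two topologies on $\mathrm{Hom}(X,\mathbb{R})$: pointwise convergence, which defines $\overline{X}$, versus uniform convergence on bounded sets, which defines the visual bordification. On a proper space the functions $h_x$ are equicontinuous and uniformly bounded on every bounded set, so an Arzel\`a--Ascoli argument shows that the two notions of convergence agree on $\overline{\Phi(X)}$; this is the identification recorded in \cite[Theorem 8.13]{Gr81} and \cite{BH99}. Once the two compactifications are known to coincide, the fixed point $h$ produced above automatically lies in the visual compactification, yielding the second assertion.
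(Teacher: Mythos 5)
Your proposal is correct and follows essentially the same route as the paper: verify that unique geodesics together with the CAT(0) convexity inequality give a (weak) conical bicombing, invoke Theorem~\ref{thm:main} for the fixed metric functional, and in the proper case identify $\overline{X}$ with the visual compactification via the coincidence of the pointwise and uniform-on-bounded-sets topologies as in \cite[Theorem 8.13]{BH99}. The paper simply cites this identification where you sketch the Arzel\`a--Ascoli argument, but the content is the same.
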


In the proper case this is a standard fact based on the classification
of isometries, the elliptic part going back to E. Cartan. This is
of fundamental importance for the theory \cite{BGS85}. In the special
case of finite dimensional Cartan-Hadamard manifolds, the compactification
is a closed ball in this case and one could instead appeal to Brouwer's
fixed point theorem. The present paper provides a different proof
of this, and in fact the first equality in Theorem \ref{thm:main}
gives more precise information.

In the nonproper case, as explained in \cite{BH99} there are isometries
that have no fixed points in the visual bordification. For example
take $\ell^{2}(\mathbb{Z})$ and consider the shift and adding $1$
to the $0$th coordinate. Edelstein's isometry mentioned above has
$d=0$ (for example since the orbit does not tend to infinity, thus
$\tau(f)=0$, see below for the definition). This is no contradiction
since for Hilbert spaces the function $h\equiv0$ is a metric functional
\cite{K22,Gu19}. On the other hand, if $d>0$ the fixed metric functional
is nontrivial. See also Theorem \ref{thm:cat0} below for more information.
This is also the case for CAT(-1)-spaces in general since it is Gromov
hyperbolic and for such spaces it follows from Maher-Tiozzo \cite{MT18}
that $h\equiv0$ is \emph{not} a metric functional even without local
compactness. For isometries of CAT(-1)-spaces previous results can
be found in \cite[Theorem 5]{K01,KN04}. Claassens \cite{C20} has
completely determined the metric functionals for the infinite dimensional
real hyperbolic space.

Important examples of isometric actions on nonproper CAT(0)-space
are provided by the Cremona groups of birational transformations in
algebraic geometry \cite{Ca11,LU21}.

\paragraph*{\noun{Injective metric space}}

Metric space that are hyperconvex, or equivalently injective, have
properties which generalizes $L^{\infty}$ and trees. One important
feature of such spaces is that they admit a conical bicombing  as
shown by Descombes-Lang in \cite{DL15}. A result from 1979 by Sine
and Soardi shows that every nonexpansive self-map of a bounded injective
metric space has a fixed point, see \cite[Ch. 13]{Ha01}. Baillon
asked whether the boundedness could be relaxed to just assuming that
the orbit is bounded, but a counterexample was found by Prus \cite{Ha01}.

\textbf{Example (Prus)} Consider the hyperconvex Banach space $X=\ell^{\infty}(\mathbb{N})$
and the map $T:X\rightarrow X$ defined by $T((y_{n}))=(1+\lim y_{n},y_{0},y_{1},...)$
where the limit is an ultrafilter or Banach limit. This map is isometric
with bounded orbits but no fixed point in $X$. This is in contrast
with Theorem \ref{thm:main} that provides an invariant metric functional
for $T$. Moreover it must be nontrivial, since as shown below in
Proposition \ref{prop:Linfinity}, no metric functional for $\ell^{\infty}(\mathbb{N})$
is trivial, that is, the linear functional $f(x)\equiv0$ is not a
metric functional. 

Once again, Theorem \ref{thm:main} provides the missing fixed point
as it were: 
\begin{cor}
\label{cor:injective}Let $X$ be an injective metric space. Then
every isometry of $X$ fixes a point in $\overline{X}$. In case $X$
is proper, any isometry fixes a point in the visual compactification. 
\end{cor}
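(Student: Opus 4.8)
The essential content is immediate from Theorem \ref{thm:main}, so the plan is first to verify that the hypotheses apply and then to address the refinement in the proper case. I would begin by recalling that, by Descombes--Lang \cite{DL15}, every injective metric space carries a conical bicombing, that is, a map $\sigma$ for which $\sigma(x,y,\cdot)$ is a constant speed geodesic from $x$ to $y$ and $d(\sigma_{xy}(t),\sigma_{x'y'}(t))\le(1-t)d(x,x')+td(y,y')$. Specializing to $x=x'$ shows that this is in particular a weak conical bicombing in the sense defined above. Since an isometry is in particular an isometric embedding, Theorem \ref{thm:main} applies and produces an element $h\in\overline{X}$ with $Th=h$. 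This already establishes the first assertion, exactly as in Corollary \ref{cor:cat0}.

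For the proper case I would identify the metric compactification with the visual one. Since $X$ is proper, the functions $h_x$ are uniformly $1$-Lipschitz, so by the Arzel\`a--Ascoli theorem pointwise convergence coincides with uniform convergence on bounded sets; hence the metric functionals agree with the horofunctions and $\overline{X}$ is Gromov's horofunction compactification. I would then represent each boundary functional as a Busemann function along a distinguished geodesic ray emanating from $x_0$: properness guarantees that any sequence of distinguished segments $\sigma_{x_0 y_n}$ with $y_n\to\infty$ subconverges to a distinguished ray, while the conical inequality controls the limit so that the associated horofunction is its Busemann function. Declaring two rays equivalent when they remain at bounded distance then yields the visual compactification, into which the fixed $h$ falls: either $h=h_y$ for an interior fixed point $Ty=y$, or $h$ is a boundary point fixed by the induced action.

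The routine part is the invocation of Theorem \ref{thm:main}; the main obstacle is the identification of $\overline{X}$ with the visual compactification in the proper case. The delicate point there is to show that every boundary horofunction is realized as the Busemann function of a distinguished ray and, conversely, that distinct equivalence classes of such rays give distinct horofunctions, so that the correspondence is a homeomorphism. This is the analogue for conical bicombings of Gromov's identification \cite[Theorem 8.13]{Gr81} in the CAT(0) setting, and it is precisely where the convexity encoded in the full conical inequality, rather than merely its weak version, is needed to ensure that the limiting rays and their Busemann functions are well defined and vary continuously.
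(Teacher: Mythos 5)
Your treatment of the first assertion is correct and is exactly the paper's route: Descombes--Lang give a conical (hence weak conical) bicombing on any injective metric space, an isometry is in particular an isometric embedding, and Theorem \ref{thm:main} yields a fixed metric functional in $\overline{X}$.

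The proper case, however, contains a genuine gap. You propose to identify $\overline{X}$ with the visual compactification by showing that every boundary metric functional is the Busemann function of a distinguished ray and that distinct equivalence classes of rays yield distinct horofunctions. That identification is \emph{false} for injective spaces, even proper ones, and the paper says so explicitly and exhibits the counterexample: for $X=\ell^{\infty}(\{1,\dots,N\})$ with the bicombing by line segments, the Busemann function of the ray $\gamma(t)=tv$ depends only on the coordinates where $v^{i}=1$ or $v^{j}=-1$, so many distinct rays (hence distinct visual boundary points) share the same horofunction, and the natural comparison map is not injective. The analogue of Gromov's Theorem 8.13 that you invoke is special to CAT(0) geometry and does not transfer to conical bicombings; this is precisely the point the paper flags when it says the metric and visual boundaries of injective spaces differ even in the proper case. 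The paper's actual argument for the second assertion bypasses the metric boundary entirely: either the isometry fixes a point $y\in X$ (which of course lies in the visual compactification), or its orbit is unbounded, in which case properness lets one extract a subsequence $T^{n_{k}}x_{0}$ converging to a visual boundary point $\gamma$; since $TT^{n_{k}}x_{0}=T^{n_{k}}(Tx_{0})$ stays within distance $d(x_{0},Tx_{0})$ of $T^{n_{k}}x_{0}$, the two sequences have the same limit in the visual compactification, whence $T\gamma=\gamma$. You would need to replace your identification step by a direct orbit argument of this kind, since the route through $\overline{X}$ does not land you in the visual compactification.
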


Details for how to deduce the second assertion of the corollary, which
is very easy, will be given below. A new topic in geometric group
theory is the subject of Helly groups, which are groups admitting
geometric isometric actions on injective metric spaces \cite{HuO21}.
Note that, although similar to the CAT(0)-case above, the boundaries
(metric vs visual) are now different even for proper spaces, the case
of $\ell^{\infty}$ illustrates this as is shown below. As Basso informed
me there is a fixed point theorem by U. Lang for isometry groups of
bounded orbits in \cite{La13}. For Busemann spaces one should mention
Navas paper \cite{N13} in this context.

\paragraph*{\noun{Diffeomorphims and biholomorphisms}}

It is well-known that the Weil-Petersson metric of the Teichmüller
space of a closed orientable surface has nonpositive curvature, is
not complete but is geodesically convex. Therefore:
\begin{cor}
Any element in the mapping class group of a genus g surface fixes
a metric functional of the corresponding Teichmüller space equipped
with the Weil-Petersson metric.
\end{cor}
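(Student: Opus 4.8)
The plan is to deduce this from Theorem~\ref{thm:main}. One cannot invoke Corollary~\ref{cor:cat0} directly, because the Weil--Petersson metric is incomplete while that corollary is stated for \emph{complete} CAT(0)-spaces; the point is precisely that Theorem~\ref{thm:main} requires only a weak conical bicombing and no completeness whatsoever. So the two things I would verify are that Teichm\"uller space carries a weak conical bicombing for the Weil--Petersson metric, and that every mapping class acts on it by an isometry.

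For the bicombing, write $X=\mathcal{T}_g$ with $d=d_{WP}$. Since the Weil--Petersson metric is geodesically convex, any two points $x,y\in X$ are joined by a geodesic lying entirely in $X$; and since $X$ is contractible, hence simply connected, with nonpositive curvature, this geodesic is unique. One may therefore define $\sigma(x,y,\cdot)$ to be the constant-speed parametrisation of this geodesic. It then remains to check the conical inequality
\[
d(\sigma_{xy}(t),\sigma_{xy'}(t))\leq t\,d(y,y').
\]
Set $f(t)=d(\sigma_{xy}(t),\sigma_{xy'}(t))$, the distance between the two geodesics issuing from the common point $x$. Nonpositive curvature makes $f$ convex, while $f(0)=0$ and $f(1)=d(y,y')$, so
\[
f(t)\leq (1-t)f(0)+t\,f(1)=t\,d(y,y'),
\]
as required. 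The convexity of $f$ is exactly Wolpert's convexity of the distance function along Weil--Petersson geodesics.

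Finally, the mapping class group acts on $\mathcal{T}_g$ by biholomorphic changes of marking that preserve the Petersson pairing defining the metric, hence by Weil--Petersson isometries; moreover each such map is invertible. Applying Theorem~\ref{thm:main} to the isometry $T$ induced by a given mapping class then yields a metric functional $h\in\overline{X}$ with $Th=h$, which is the asserted fixed metric functional.

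I expect the only delicate point to be the verification of the conical property in the incomplete setting: one must be sure that geodesics between interior points do not escape toward the missing boundary strata and that the curvature-convexity inequality survives there. Both are guaranteed by the cited geodesic convexity and nonpositive curvature of the Weil--Petersson metric, so no compactness or completeness of $\mathcal{T}_g$ is actually needed.
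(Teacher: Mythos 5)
Your proposal is correct and follows essentially the same route as the paper: the corollary there is deduced directly from Theorem~\ref{thm:main} using precisely the facts that the Weil--Petersson metric is geodesically convex with nonpositive curvature (so the unique geodesics furnish a weak conical bicombing) and that completeness is not required. The details you supply --- the conical inequality via convexity of the distance function and the isometric action of the mapping class group --- are exactly what the paper leaves implicit.
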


Fixed points of mapping classes were already well-understood from
Thurston's compactification of Teichmüller spaces, using Brouwer's
theorem. 

The space of all Riemannian metrics $\mathrm{Met}(M)$ of a manifold
$M$ is a space first introduced by DeWitt in general relativity and
studied early on by Ebin in the 1960s. The diffeomorphisms group acts
on it by isometry.
\begin{cor}
Let $f$ be a diffeomorphism of a closed orientable compact manifold
$M$. Then $f$ fixes a metric functional of $\mathrm{Met}(M)$.
\end{cor}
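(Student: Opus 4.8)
The plan is to realize $\mathrm{Met}(M)$ as a metric space carrying a weak conical bicombing on which $f$ acts as an isometry, and then to invoke Theorem \ref{thm:main}. First I would take the metric to be the DeWitt--Ebin $L^{2}$ Riemannian metric, whose inner product at a metric $g$ on symmetric $2$-tensors $a,b$ is $\langle a,b\rangle_{g}=\int_{M}\mathrm{tr}(g^{-1}ag^{-1}b)\,d\mathrm{vol}_{g}$, and let $d$ be the associated length distance. A diffeomorphism acts by pullback, $g\mapsto f^{*}g$, and I would check the routine naturality step that $f^{*}$ is a bijective isometry of $(\mathrm{Met}(M),d)$: this is immediate because the integrand $\mathrm{tr}(g^{-1}ag^{-1}b)\,d\mathrm{vol}_{g}$ is a density built naturally from $g,a,b$ and transforms correctly under pullback, and because $\mathrm{vol}_{f^{*}g}=f^{*}\mathrm{vol}_{g}$. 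Hence $f^{*}$ is an isometry in the sense of the paper, with inverse $(f^{-1})^{*}$.

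The substantive point is to exhibit a weak conical bicombing on $(\mathrm{Met}(M),d)$, and here I would exploit that the $L^{2}$ metric is \emph{ultralocal}: the metric involves no derivatives of $g$, so its geodesic equation decouples over the points of $M$ and its geodesics are computed fibrewise. At each $p\in M$ the fibre is the space of positive definite symmetric forms on $T_{p}M$, a copy of $\mathrm{Pos}(n)$ with $n=\dim M$, and the integrand endows this model fibre with a volume-weighted metric of nonpositive curvature, as computed by Freed--Groisser and Gil-Medrano--Michor. I would define the distinguished geodesic between two metrics $g_{0},g_{1}$ fibrewise, interpolating $(g_{0})_{p}$ and $(g_{1})_{p}$ along the model geodesic in the fibre; by ultralocality this is a constant-speed $d$-geodesic, and the $L^{2}$ distance is the $L^{2}$-integral of the fibrewise distances, the content of Clarke's analysis of the $L^{2}$ metric. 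The conical inequality then follows by integration: the fibrewise estimate $d_{\mathrm{fib}}(\sigma_{g_{0}g_{1}}(t)_{p},\sigma_{g_{0}g_{1}'}(t)_{p})\le t\,d_{\mathrm{fib}}((g_{1})_{p},(g_{1}')_{p})$ integrates over $M$ to $d(\sigma_{g_{0}g_{1}}(t),\sigma_{g_{0}g_{1}'}(t))\le t\,d(g_{1},g_{1}')$. In effect this presents $\mathrm{Met}(M)$ as an $L^{2}$-type amalgam of nonpositively curved fibres; alternatively one may simply cite Clarke's theorem that the $L^{2}$ metric has the CAT(0) property to obtain the bicombing directly.

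With the bicombing and the isometric action in hand, Theorem \ref{thm:main} applied to $T=f^{*}$ yields an $h\in\overline{\mathrm{Met}(M)}$ with $f^{*}h=h$, the asserted fixed metric functional; equivalently one may invoke Corollary \ref{cor:cat0} after passing to the CAT(0) completion and restricting the fixed functional to the dense subset $\mathrm{Met}(M)$. I expect the main obstacle to be the geometric input of the second paragraph, not the fixed-point step. One must ensure that the distinguished fibrewise geodesics between genuine (nondegenerate) metrics exist and remain within $\mathrm{Met}(M)$ despite the well-known incompleteness of the $L^{2}$ metric, where geodesics can run into degenerate metrics in finite time, and that fibrewise convexity survives the volume weighting. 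Both are controlled by the explicit geodesics and the nonpositive curvature established in the cited works, so the verification amounts to assembling known facts; the genuinely new content is the appeal to Theorem \ref{thm:main}, which requires neither completeness nor properness and therefore applies directly to this infinite-dimensional, non-proper setting.
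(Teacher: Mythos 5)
Your proposal is correct in substance and in fact contains the paper's proof as the ``alternative'' you mention in passing: the paper simply cites Clarke's theorem (\cite{Cl13}) that the metric completion of $\mathrm{Met}(M)$ with the $L^2$ (DeWitt--Ebin) distance is a CAT(0) space, applies Theorem \ref{thm:main} to the completion (a complete CAT(0) space carries a conical bicombing by uniqueness and convexity of geodesics), and then uses Lemma \ref{lem:completion} to identify $\overline{\mathrm{Met}(M)}$ with the metric compactification of the completion, so the fixed metric functional lives on $\mathrm{Met}(M)$ itself. Your primary route --- building a weak conical bicombing directly on the incomplete space $\mathrm{Met}(M)$ by fibrewise interpolation in the weighted $\mathrm{Pos}(n)$ fibres --- is genuinely different and, if carried out, would give slightly more (an explicit bicombing on the space itself rather than on its completion). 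But it is also where all the risk sits, and you correctly identify it: you must know that the distinguished fibrewise geodesic between two nondegenerate metrics stays in $\mathrm{Met}(M)$, and that the $L^2$ distance really is the $L^2$ amalgam of the fibrewise distances so that the conical inequality survives integration; both are theorems of Clarke (and Freed--Groisser, Gil-Medrano--Michor) whose precise statements live most comfortably on the completion anyway. The completion route buys you a one-line proof at the cost of the small bookkeeping in Lemma \ref{lem:completion} (extending the isometry $f^*$ and the bicombing to the completion, which is automatic since both are nonexpansive); your direct route buys concreteness at the cost of re-deriving parts of Clarke's analysis. Either way the fixed-point step is the same, and your naturality check that $f^*$ is an isometry is exactly what is needed.
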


\begin{proof}
It is known since some time that this space has features of nonpositive
curvature with a good description of what the geodesics are. The version
that applies best for us here is the theorem in \cite{Cl13} that
asserts that the metric completion is a CAT(0)-space. From this the
main fixed point theorem applies directly taking into account Lemma
\ref{lem:completion} below.
\end{proof}
A certain space of Kähler potentials of a connected compact Kähler
manifold was introduced by Calabi in the 1950s. It has a Weil-Petersson
type metric and the following holds for the same reasons as the previous
corollary:
\begin{cor}
Any complex automorphism of a compact Kähler manifold fixes a metric
functional of the associated space of Kähler potentials.
\end{cor}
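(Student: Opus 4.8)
The plan is to follow the template of the preceding corollary verbatim, with the space of Kähler potentials playing the role of $\mathrm{Met}(M)$. First I would fix a Kähler class on the compact Kähler manifold and let $\HN$ denote the associated space of Kähler potentials relative to a reference form, taken modulo additive constants and equipped with the Mabuchi--Semmes--Donaldson $L^{2}$-metric. The geodesics of this metric are well understood, being governed by a homogeneous complex Monge--Ampère equation, and Calabi and Chen established that $\HN$ is nonpositively curved in the sense of Alexandrov; this is precisely the Weil--Petersson type feature alluded to in the statement.

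The key geometric input, occupying the place that the completion theorem of \cite{Cl13} held for $\mathrm{Met}(M)$, is that the metric completion of $\HN$ is a complete CAT(0)-space. This is by now standard, following from the work of Calabi--Chen together with Darvas' identification of the completion. Next I would observe that a complex automorphism $f$ preserving the chosen Kähler class acts on $\HN$ by pullback of potentials, $\varphi \mapsto f^{*}\varphi$, suitably normalized by a constant so as to remain in $\HN$; since the $L^{2}$-metric is assembled from the invariantly defined volume form and Riemannian structure, this action preserves distances. Hence $f$ induces an isometry of $\HN$, which extends to an isometry of its CAT(0)-completion.

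Finally, I would apply Corollary \ref{cor:cat0} to the completion to obtain a fixed metric functional there, and then invoke Lemma \ref{lem:completion} to transport it back to a fixed metric functional of $\HN$ itself, as required. The fixed point step is immediate once the geometry is set up, so the main obstacle is not the application of Theorem \ref{thm:main} but the two geometric verifications underlying it: that the completion is genuinely CAT(0) in this infinite-dimensional, non-proper setting, and that the automorphism action extends isometrically to the completion. Both are exactly what the cited completion result and Lemma \ref{lem:completion} are designed to supply.
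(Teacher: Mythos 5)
Your proposal is correct and follows essentially the same route as the paper, which likewise states that the result ``holds for the same reasons as the previous corollary'': the metric completion of the space of K\"ahler potentials is a CAT(0)-space, the automorphism acts by isometry, and one applies Theorem \ref{thm:main} together with Lemma \ref{lem:completion}. If anything, you are more explicit than the paper in flagging the need for the automorphism to preserve the chosen K\"ahler class and in naming the sources for the CAT(0) property of the completion.
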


The completed space which is a CAT(0)-space was used for progress
on a conjecture of Donaldson on the asymptotic behavior of the Calabi
flow, see \cite{Ba18} for a discussion and references about this
topic. The Calabi flow is in fact nonexpansive in this metric and
when there is no constant scalar curvature metric on the underlying
Kähler manifold, the flow may diverge. Note that the theorems in \cite{KM99,K01}
(compare with \ref{thm:cat0} below) apply to (the time-one map of)
the Calabi flow. These results provide a geodesic ray or metric functional
and may thus be relevant for Donaldson's conjecture. 

Holomorphic maps are always nonexpansive in the Kobayashi pseudo-metric
and thus biholomorphisms are isometries. The fixed point theorem in
this case will be treated in a forthcoming paper since some additional
arguments are needed in this case.

In these examples, from algebraic geometry, topology and complex geometry,
the metric fixed point theorem provides something new and nontrivial,
at least in the case that $d>0$, and the metric compactification
in these cases deserves to be studied. 

\paragraph*{\noun{The invariant subspace problem}}

The invariant subspace problem, open since the 1950s, asks whether
every bounded linear operator of a separable infinite dimensional
complex Hilbert space has a nontrivial closed invariant subspace.
In finite dimensions the statement follows for example from the fundamental
theorem of algebra, and von Neumman, who is one of the first to have
considered this problem, proved it in the case of compact operators.
Without loss of generality one may assume that the operator is invertible
otherwise one can add a multiple of the identity map, which does not
change the invariant subspaces.

The set of bounded positive operators $\mathrm{Pos}$ of a Hilbert
space $H$ has a natural metric which admits a weak conical bicombing.
Moreover every invertible bounded linear operator $g:H\rightarrow H$
acts by isometry on $\mathrm{Pos}$ via $p\mapsto gpg^{*}.$ Applying
Theorem \ref{thm:main} we get:
\begin{thm}
\label{thm:ISP}Every bounded, invertible linear operator $g$ of
a Hilbert space has an invariant nonconstant metric functional $h:\mathrm{Pos}\rightarrow\mathbb{R}$,
\[
gh=h.
\]
Moreover, the following equality holds:
\[
\inf_{p\in\mathrm{Pos}}\left|\log\sup_{v\neq0}\frac{(pg^{*}v,g^{*}v)}{(pv,v)}\right|=\lim_{n\rightarrow\infty}\frac{1}{n}\left|\log\left\Vert g^{n}g^{*n}\right\Vert \right|.
\]
\end{thm}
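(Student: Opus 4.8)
The plan is to derive Theorem \ref{thm:ISP} as a direct application of Theorem \ref{thm:main} to the specific isometric action of $g$ on $\mathrm{Pos}$, and then to identify the displacement constant $d=\inf_x d(x,Tx)$ with the two sides of the claimed equality. First I would fix the relevant metric on $\mathrm{Pos}$: the natural Riemannian/Finsler structure here is the one whose distance is governed by the operator norm of logarithms of quotients, so that for the isometry $T_g(p)=gpg^*$ the displacement $d(p,T_g p)$ is computed in terms of $\log\sup_{v\neq 0}(pg^*v,g^*v)/(pv,v)$. I would verify the two facts asserted in the paragraph preceding the theorem, namely that this metric admits a weak conical bicombing (it is a symmetric space of nonpositive curvature type, and geodesics $t\mapsto p^{1/2}(p^{-1/2}qp^{-1/2})^t p^{1/2}$ furnish the distinguished geodesics satisfying the conical inequality), and that $p\mapsto gpg^*$ is a distance-preserving map. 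Granting these, Theorem \ref{thm:main} immediately produces an invariant metric functional $h$ with $h(T_g p)=h(p)-d$.

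Next I would establish that $h$ is nonconstant, equivalently nontrivial. The key point is that constancy of $h$ would force $d=0$ by the invariance relation, so the whole content reduces to showing $d>0$ fails only in degenerate situations, OR — more robustly — to arguing separately that the metric functional provided by the theorem cannot be the constant zero functional on $\mathrm{Pos}$. I expect the cleanest route is to invoke a structural fact about metric functionals on $\mathrm{Pos}$ analogous to the $\ell^\infty$ and CAT$(-1)$ discussions in the introduction: on this symmetric space the trivial functional $h\equiv 0$ is not a genuine metric functional (or the orbit behavior forces a nontrivial limit), so the element of $\overline{\mathrm{Pos}}$ delivered by Theorem \ref{thm:main} is automatically nonconstant. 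This nontriviality is the crux that connects the result to the invariant subspace problem.

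For the displayed equality I would compute both sides as the translation length of $T_g$. The left-hand side is by construction $d=\inf_{p}d(p,T_g p)$, once the distance $d(p,gpg^*)$ is identified with $\bigl|\log\sup_{v\neq 0}(pg^*v,g^*v)/(pv,v)\bigr|$ (this is exactly the Finsler/operator-norm distance evaluated along the displacement, and the infimum over $p$ is the minimal displacement). The right-hand side is the stable translation length $\lim_{n}\frac{1}{n}d(I,T_g^n I)$ evaluated at the basepoint $p=I$, where $T_g^n I=g^n g^{*n}$ and $d(I,g^n g^{*n})=\bigl|\log\|g^n g^{*n}\|\bigr|$ up to the symmetric normalization. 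The equality $\inf_p d(p,T_g p)=\lim_n \frac{1}{n}d(I,T_g^n I)$ is precisely the identification of minimal displacement with stable translation length, which holds in nonpositively curved/bicombing settings and is essentially the quantity $d$ appearing in Theorem \ref{thm:main} together with the subadditivity of $n\mapsto d(I,T_g^n I)$.

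The main obstacle I anticipate is not the abstract application but the two analytic identifications: first, verifying carefully that the distance function on $\mathrm{Pos}$ really equals the logarithmic operator-norm expression and that $T_g$ is an isometry for it (this requires the precise definition of the metric and a short computation with the spectral/functional calculus), and second, proving $\inf_p d(p,T_g p)=\lim_n\frac{1}{n}\bigl|\log\|g^n g^{*n}\|\bigr|$, where one inequality follows from the triangle inequality and Fekete's subadditive lemma but the reverse — that the stable length is actually attained in the limit sense as the minimal displacement — uses convexity of the displacement function afforded by the conical bicombing. Establishing nontriviality of $h$ rigorously may also require an explicit description or at least a nonvanishing argument for metric functionals on $\mathrm{Pos}$, which is the most delicate and least routine part.
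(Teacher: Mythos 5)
Your proposal follows essentially the same route as the paper: the Thompson metric on $\mathrm{Pos}$ with the conical bicombing furnished by Segal's inequality (the exponential images of lines in $\mathrm{Sym}$, exactly the geodesics you write down), the isometric action $p\mapsto gpg^{*}$, Theorem \ref{thm:main} to produce the invariant functional, and the identification of the minimal displacement with the stable translation length $\lim_{n}\frac{1}{n}\left|\log\left\Vert g^{n}g^{*n}\right\Vert \right|$ via $\tau\leq d$ (subadditivity) and $d\leq\tau$ (nonexpansiveness of $h$ together with $h(T^{n}x)=h(x)-nd$). The one ingredient you defer --- that no metric functional of $\mathrm{Pos}$ is constant --- is precisely the paper's Proposition \ref{prop:Posfunctional}, proved by showing $h_{p}$ must exceed $1/2$ at one of $\exp(I)$ or $\exp(-I)$ once $\left\Vert \log p\right\Vert $ is large (using that the exponential map is noncontracting, so $h_{p}(x)\geq\left\Vert \log x-\log p\right\Vert -\left\Vert \log p\right\Vert $); note that your first fallback of establishing $d>0$ could not work, since $d=0$ whenever $g$ is unitary.
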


This presents a new approach to the invariant subspace problem and
a novel viewpoint of operator theory more generally. In finite dimensions
the statement above implies the existence of a nontrivial invariant
subspace as follows, either the operator is unitary with respect to
some scalar product or from the explicit determination of the metric
functionals in Lemmens' paper \cite{L21} there is an associated closed
linear subspace (the kernel of a semipositive operator), which is
nontrivial in case the dimension is at least two and that is invariant. 

The fact that the metric functional is nontrivial comes from the following
statement of independent interest. This is in contrast with linear
functionals and weak limits that can be just $0$.
\begin{prop}
Every metric functional of the space $\mathrm{Pos}$ is an unbounded
function. 
\end{prop}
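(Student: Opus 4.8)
The plan is to exploit the radial (scaling) structure of $\mathrm{Pos}$ under its Thompson-type metric $d$, for which $d(A,B)=\max\{\log M(A|B),\log M(B|A)\}$ with $M(A|B)=\inf\{\lambda>0: A\le\lambda B\}=\sup_{v\ne0}(Av,v)/(Bv,v)$, and to read off the behaviour of an arbitrary metric functional along a single scaling ray. Fix a metric functional $h=\lim_\alpha h_{p_\alpha}$, a net-limit of the functions $h_{p_\alpha}(\cdot)=d(\cdot,p_\alpha)-d(p_0,p_\alpha)$, together with the basepoint $p_0$. I would test $h$ on the ray $s\mapsto e^s p_0$, which lies in $\mathrm{Pos}$, using the homogeneity $M(e^s p_0|q)=e^s M(p_0|q)$ and $M(q|e^sp_0)=e^{-s}M(q|p_0)$. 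Writing $a_\alpha=\log M(p_0|p_\alpha)$, $b_\alpha=\log M(p_\alpha|p_0)$ and $\delta_\alpha=b_\alpha-a_\alpha$, a direct computation yields
\[
h_{p_\alpha}(e^s p_0)=\max(s+a_\alpha,\,-s+b_\alpha)-\max(a_\alpha,b_\alpha),
\]
and for $s>0$ this simplifies to the clipped linear expression $\max\{-s,\,\min(s,\,s-\delta_\alpha)\}$.

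The function $H(s):=h(e^s p_0)=\lim_\alpha h_{p_\alpha}(e^s p_0)$ is then a finite, convex, $1$-Lipschitz function of $s\in\mathbb{R}$ with $H(0)=0$, being a pointwise net-limit of functions with these properties. The key reduction is the elementary fact that a convex function that is bounded on all of $\mathbb{R}$ must be constant; hence if $h$ were bounded on $\mathrm{Pos}$, then $H$ would be bounded, forcing $H\equiv0$. So the whole statement reduces to ruling out $H\equiv0$, that is, to showing that the trivial metric functional cannot be recovered along this ray.

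To rule out $H\equiv 0$ I would use the clip formula: for fixed $s>0$ the value $\max\{-s,\min(s,\,s-\delta_\alpha)\}$ equals $s-\delta_\alpha$ whenever $s-\delta_\alpha\in(-s,s)$, and since $0$ lies in the interior of $[-s,s]$, the equality $\lim_\alpha\max\{-s,\min(s,\,s-\delta_\alpha)\}=0$ forces $\delta_\alpha\to s$. As this would have to hold simultaneously for every $s>0$, and a net cannot converge to two distinct reals, we reach a contradiction; therefore $H\not\equiv0$ and $h$ is unbounded. I expect the main obstacle to be bookkeeping rather than conceptual: first, pinning down the precise metric on $\mathrm{Pos}$ and verifying the homogeneity of $M$ and the resulting clip formula; and second, handling the fact that the metric compactification need not be metrizable, so that $h=\lim_\alpha h_{p_\alpha}$ is genuinely a net limit and the uniqueness-of-limit step must be phrased for nets, most safely by passing to a subnet along which $\delta_\alpha$ converges in $[-\infty,+\infty]$ and checking that $H(s)=0$ is compatible only with $\delta_\alpha\to s$.
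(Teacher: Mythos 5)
Your argument is correct, and it reaches the conclusion by a genuinely different route than the paper, even though both ultimately test the functional on the same scaling ray $s\mapsto e^{s}I$. The paper obtains its lower bounds by passing through the exponential map: writing $p=\exp(Y)$, it uses Segal's inequality (the exponential map is metric non-contracting from $(\mathrm{Sym},\|\cdot\|)$ into $\mathrm{Pos}$) to get $h_{p}(x)\geq\|\log x-Y\|-\|Y\|$, then chooses a near-norming vector $v$ for $Y$, splits into cases according to the sign of $(Yv,v)$, and evaluates at $\exp(\pm cI)$; it also treats cluster points of bounded and unbounded families of base points by separate arguments. You instead compute $d(e^{s}p_{0},p)$ exactly from the order-homogeneity $M(e^{s}p_{0}|q)=e^{s}M(p_{0}|q)$ and $M(q|e^{s}p_{0})=e^{-s}M(q|p_{0})$, arriving at the clipped-linear formula $h_{p}(e^{s}p_{0})=\max\{-s,\min(s,s-\delta_{p})\}$ for $s>0$, and then conclude from convexity of the restriction $H$ to the geodesic line together with the observation that $H\equiv0$ would force the single net $(\delta_{\alpha})$ to converge simultaneously to every $s>0$. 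I checked the clip formula in all three cases ($\delta\leq0$, $0<\delta\leq 2s$, $\delta>2s$) and the step that a clipped value converging to an interior point of $[-s,s]$ forces the unclipped value to converge to it; both are fine. What your approach buys: it is self-contained (no Segal inequality, no comparison with the linear space $\mathrm{Sym}$), it handles bounded and unbounded nets uniformly, and as a byproduct it describes exactly what any metric functional of $\mathrm{Pos}$ can look like along the scalar ray. What the paper's version buys is consistency with its general strategy of transporting the $\ell^{\infty}$ argument to $\mathrm{Pos}$ via the exponential map, which is the mechanism reused elsewhere in that section. The only points to make explicit in a final write-up are the two you already flag yourself: that $M(A|B)=\sup_{v\neq0}(Av,v)/(Bv,v)$ requires $B$ to be boundedly invertible (the standing hypothesis on $\mathrm{Pos}$), and that the limit defining $h$ is a net limit, so the last step is cleanest after passing to a subnet along which $\delta_{\alpha}$ converges in $[-\infty,+\infty]$ and noting that $H(s)=0$ then pins that limit to equal $s$.
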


In conclusion, Theorem \ref{thm:ISP} shows that a general bounded
linear operator does have some structure. Carleson once speculated
that the invariant subspace problem in the Hilbert space case perhaps
is a fixed point theorem on some Grassmannian \cite{CJ02}. Even though
this is not achieved in the present paper, the author believes that
the metric perspective on operator theory is of future interest more
generally. 

One suggestion could be to investigate the validity of an extension
of the Ryll-Nardzewski fixed point theorem for affine isometry groups
and see how it relates to Dixmier's unitarization problem. As Monod
pointed out to me years ago, a uniformly bounded representation of
a group corresponds to an isometric action with bounded orbits in
$\mathrm{Pos}$ and the equivalence to a unitary representation is
precisely the existence of a fixed point in $\mathrm{Pos}$ (that
fixed points may not exist, in contrast to CAT(0)-spaces, is also
coherent with Basso's example mentioned above). Some groups are non-unitarizable,
so once again the metric functionals is potentially an appropriate
device. This topics connects to the power-bounded mean ergodic theorem
in section \ref{sec:Mean-ergodic-theorems}, since the latter corresponds
to uniformly bounded representations of $\mathbb{Z}$ (or $\mathbb{N}$).

One area where spaces of positive operators have already been useful
is multiplicative ergodic theorems for operators. The result \cite[Corollary 7.1]{KM99}
is a substantial and surprising strengthening of an important theorem
of Ruelle \cite{R82} under an extra Hilbert-Schmidt condition that
allows using a submanifold of $\mathrm{Pos}$ that is CAT(0). A general
result is \cite[Theorem 1.7]{GK20} and in \cite{BHL20} several new
multiplicative ergodic theorems are obtained using finite traces and
a study of the corresponding spaces of positive operators (again CAT(0)
here). 

\textbf{Acknowledgements: }I thank Giuliano Basso, Tanja Eisner, Armando
Gutiérrez, Bas Lemmens, Alexander Lytchak, and Leonid Potyagailo for
several helpful comments.

\section{Metric preliminaries}

We consider the metric category, that is, objects are metric spaces
and morphims are nonexpansive maps, which means that
\[
d(f(x),f(y))\leq d(x,y)
\]
for all $x$ and $y$. The composition of nonexpansive maps remains
nonexpansive. Isometries are the isomorphisms in this category. 

One defines the \emph{minimal displacement 
\[
d=d_{f}=\inf_{x}d(x,f(x))
\]
 }and the \emph{translation number} 
\[
\ensuremath{\tau=\tau_{f}=\lim_{n\rightarrow\infty}d(x,f^{n}(x))/n}.
\]
 These numbers are analogs of the operator norm and spectral radius,
respectively, in particular note that $\tau\leq d.$ To see this,
since $\tau$ is independent of the point $x$ we may take a point
close to the infimal displacement in the sense that $d(x,f(x))<d+\epsilon.$
By the triangle inequality, $d(x,f^{n}(x))<n(d+\epsilon)$, so $\tau<d+\epsilon$
for any $\epsilon>0$ and hence the inequality. Rigid rotations of
a circle show that the inequality can be strict. On the other hand,
note that when Theorem \ref{thm:main} applies then necessarily $\tau=d$,
since by the theorem and in view of that metric functionals are nonexpansive,
\[
d(x,T^{n}x)\geq\left|h(x)-h(T^{n}x)\right|\geq nd.
\]
This was already known from \cite{GV12} and in the nonpositive curvature
case for isometries from Gromov. 

Let $\mathrm{Hom}(X,\mathbb{R})$ be the space of morphisms $X\rightarrow\mathbb{R}$
(\emph{fonctionnelles }in \cite{Ba32}) equipped with the topology
of pointwise convergence. Given a base point $x_{0}$ of the metric
space $X$, let 
\[
\Phi:X\rightarrow\mathrm{Hom}(X,\mathbb{R})
\]
be defined via
\[
x\mapsto h_{x}(\cdot):=d(\cdot,x)-d(x_{0},x).
\]
This is a continuous injective map and the closure $\overline{\Phi(X)}$
is compact, say the Arzela-Ascoli or Tychonoff's theorems. This is
the metric space analog of the Banach\textendash Alaoglu\emph{ }theorem.
We call $\overline{X}:=\overline{\Phi(X)}$ the \emph{metric compactification
of }$X$ and its elements \emph{metric functionals}. These are nonexpansive
and satisfy a \emph{metric Hahn-Banach theorem }\cite{K21a}. There
is a growing list of studied examples \cite{W18,KL18,HSW18,MT18,Gu19,C20,LP21,AK22}
to cite a few recent papers. It is easy to see that for Banach spaces
metric functionals are convex functions. In \cite{BCFS22} the same
topology is used as here for nonproper spaces, and that paper contains
a careful discussion of this construction. 

Busemann observed in the early part of the 20th century that any geodesic
ray $\gamma$ defines a metric functional: $b_{\gamma}(y)=\lim_{t\rightarrow\infty}d(y,\gamma(t))-t,$
called the \emph{Busemann function }of $\gamma$. This is akin to
how a vector defines a linear functional via a scalar product. Busemann
functions have been useful for a long time in the theories of spaces
with nonpositive respectively nonnegative curvature. The usefulness
of these metric notions without curvature conditions is illustrated
by the work described in \cite{KL11}.

Note the effect of changing base point $x_{0}$ to $y_{0}$:
\[
d(y,x)-d(x_{0},x)=d(y,x)-d(y_{0},x)+d(y_{0},x)-d(x_{0},x).
\]
It amounts in other words to just adding the constant $h_{x}(y_{0}).$

Isometries act by homeomorphisms of $\overline{X}$ via
\[
(Th)(x)=h(T^{-1}x)-h(T^{-1}x_{0}).
\]
Note that if for some metric functional $h$ one has that $h(Tx)=h(x)+c$
(or which is the same $h(T^{-1}x)=h(x)-c$) for all $x\in X$ with
a constant $c$, then $Th=h$. Conversely, having a fixed point, $h(x)=(Th)(x)=h(T^{-1}x)-h(T^{-1}x_{0})$
for all $x$, then the same equality holds with constant $c=h(T^{-1}x_{0})$.
As discussed in most references on the topic, from \cite{Gr81} to
\cite{BCFS22}, one could consider functionals up to an additive constant,
projectively, removing the role of any fixed based point. Some things
are more clear from this point of view, but functionals would then
just be defined up to an additive constant. 

Let $G$ be a group that fixes $h$, note that $T(g):=-h(gx_{0})=-d_{g}$
then defines a group homomorphism $T:G\rightarrow\mathbb{R}$. 

Bas Lemmens and I observed the following easy fact during a discussion
(essentially as recorded in \cite{K21a}), for isometries this was
pointed out by Gromov long time ago \cite{Gr81}: 
\begin{prop}
\label{prop:d=00003D0}Let $T$ be a nonexpansive map of a metric
space $X.$ Suppose that $d(T)=0.$ Then there is a metric functional
$h\in\overline{X}$ such that
\[
h(Tx)\leq h(x)
\]
 for all $x\in X$. In case $T$ is an isometric embedding then $h(Tx)=h(x)$
for all $x$, and when $T$ is an isometry $Th=h$.
\end{prop}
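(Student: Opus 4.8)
The plan is to produce $h$ as a limit point of the embedded points $h_{x_n}$ along a minimizing sequence, and then push the defining inequality through the limit. Since $d(T)=\inf_x d(x,Tx)=0$, first I would choose a sequence $(x_n)$ in $X$ with $d(x_n,Tx_n)\to 0$. Each $h_{x_n}=d(\cdot,x_n)-d(x_0,x_n)$ lies in the compact space $\overline{X}$, so some subnet of $(h_{x_n})$ converges pointwise to a metric functional $h\in\overline{X}$; I pass to that subnet, along which $d(x_n,Tx_n)\to 0$ is still true, and keep the notation $x_n$.

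The key estimate is a one-line use of nonexpansiveness and the triangle inequality. For any fixed $x\in X$,
\[
h_{x_n}(Tx)-h_{x_n}(x)=d(Tx,x_n)-d(x,x_n)\le d(Tx,Tx_n)+d(Tx_n,x_n)-d(x,x_n)\le d(Tx_n,x_n),
\]
where the last step uses $d(Tx,Tx_n)\le d(x,x_n)$. Letting the subnet run, the right-hand side tends to $0$, and pointwise convergence gives $h(Tx)-h(x)\le 0$, which is the first assertion.

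When $T$ is an isometric embedding I would derive the matching lower bound the same way: now $d(x,x_n)=d(Tx,Tx_n)\le d(Tx,x_n)+d(x_n,Tx_n)$, so that $h_{x_n}(Tx)-h_{x_n}(x)\ge -d(x_n,Tx_n)\to 0$, and the limit yields $h(Tx)\ge h(x)$. Combining the two inequalities gives $h(Tx)=h(x)$ for every $x$. Finally, when $T$ is an isometry I would invoke the remark preceding the proposition: the identity $h(Tx)=h(x)$ is exactly the case $c=0$ of $h(Tx)=h(x)+c$, which was already observed to force $Th=h$. Concretely, replacing $x$ by $T^{-1}x$ gives $h(T^{-1}x)=h(x)$, and since every metric functional satisfies $h(x_0)=0$ one reads off $(Th)(x)=h(T^{-1}x)-h(T^{-1}x_0)=h(x)$.

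The argument is essentially routine, consistent with the proposition being labelled an easy fact, so there is no serious obstacle. The only point requiring care is that $\overline{X}$ is compact but need not be sequentially compact when $X$ is non-separable, so one must extract a convergent \emph{subnet} rather than a subsequence; if $X$ is separable the equicontinuity of the nonexpansive family makes the relevant topology metrizable and an ordinary subsequence suffices.
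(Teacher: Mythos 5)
Your proposal is correct and follows essentially the same route as the paper: a minimizing sequence for the displacement, the same triangle-inequality-plus-nonexpansiveness estimate on $h_{x_n}(Tx)-h_{x_n}(x)$, the symmetric estimate for the reverse inequality in the isometric case, and the preceding remark to conclude $Th=h$. Your added care about extracting a subnet (rather than a subsequence) from the compact but possibly non-metrizable $\overline{X}$ is a minor sharpening of a point the paper passes over by simply invoking a limit point.
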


\begin{proof}
For any $\epsilon>0$ , take $y_{\epsilon}$ such that $d(y_{\epsilon},Ty_{\epsilon})<\epsilon$.
Take $x\in X$ and consider
\[
d(Tx,y_{\epsilon})-d(x_{0},y_{\epsilon})\leq d(Tx,Ty_{\epsilon})+d(Ty_{\epsilon},y_{\epsilon})-d(x_{0},y_{\epsilon})\leq\epsilon+d(x,y_{\epsilon})-d(x_{0},y_{\epsilon}).
\]
This shows that any limit point $h$ of $h_{y_{\epsilon}}$ as $\epsilon$
approaches $0$, which exists by compactness, has the property that
\[
h(Tx)\leq h(x).
\]
Now let us assume that $T$ is isometric, then 
\[
d(x,y_{\epsilon})-d(x_{0},y_{\epsilon})=d(Tx,Ty_{\epsilon})-d(x_{0},y_{\epsilon})\leq d(Tx,y_{\epsilon})+d(y_{\epsilon},Ty_{\epsilon})-d(x_{0},y_{\epsilon})
\]
\[
\leq\epsilon+d(Tx,y_{\epsilon})-d(x_{0},y_{\epsilon}),
\]
which in the limit shows that $h(x)\leq h(Tx).$ Thus $h(Tx)=h(x)$
and $Th=h$ if $T$ is an isometry.

Therefore $h(Tx)=h(x)$ for all $x$, and by a remark above $Th=h$
when $T$ is an isometry thus actually acting on $\overline{X}$. 
\end{proof}
Let us spell out a special case:
\begin{prop}
Let $T$ be an affine isometry of a bounded convex set $C$ of a reflexive
Banach space. Then there is a point $x\in C$ such that $Tx=x$.
\end{prop}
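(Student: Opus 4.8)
The plan is to reduce the statement to the situation of Proposition \ref{prop:d=00003D0} by first checking that $d(T)=0$, and then to upgrade the resulting approximate fixed points to a genuine fixed point using reflexivity. Fix a base point $x_{0}\in C$ and form the Ces\`aro averages
\[
z_{n}=\frac{1}{n}\sum_{k=0}^{n-1}T^{k}x_{0}.
\]
Since $C$ is convex each $z_{n}$ lies in $C$, and since $T$ is affine it commutes with these affine combinations, so that
\[
Tz_{n}-z_{n}=\frac{1}{n}\left(T^{n}x_{0}-x_{0}\right).
\]
As $C$ is bounded, the right-hand side tends to $0$ in norm; hence $\left\Vert z_{n}-Tz_{n}\right\Vert \rightarrow0$ and in particular $d=d_{T}=0$. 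This is precisely the hypothesis of Proposition \ref{prop:d=00003D0}, and here it will be exploited through the explicit minimizing sequence $(z_{n})$.

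Next I would use reflexivity. The sequence $(z_{n})$ is bounded, so by reflexivity it has a subsequence $z_{n_{j}}$ converging weakly to some point $x$. Taking $C$ to be closed (as in the closed convex case to which Theorem \ref{thm:main} was applied above), convexity makes $C$ weakly closed by Mazur's theorem, so that $x\in C$. It remains to see that this weak limit is fixed by $T$.

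The key point, and the step I expect to require the most care, is that $T$ is weakly sequentially continuous, which is where affineness together with the isometry property enters. Choosing a reference point $p\in C$ and writing $Tc-Tp=L(c-p)$ for $c\in C$, affineness produces a well-defined linear map $L$ on the span of $C-C$, and the isometry property $\left\Vert Tc-Tp\right\Vert =\left\Vert c-p\right\Vert $ shows that $L$ is an isometric, hence bounded, linear operator (the case where $C$ is a singleton being trivial). Bounded linear operators are weak-to-weak continuous, so $Tz_{n_{j}}=Lz_{n_{j}}+b$, with $b=Tp-Lp$, converges weakly to $Tx$. On the other hand $Tz_{n_{j}}-z_{n_{j}}\rightarrow0$ in norm, hence weakly, so $Tz_{n_{j}}$ also converges weakly to $x$. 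Uniqueness of weak limits then gives $Tx=x$, as desired. The only genuine subtlety is the verification that the affine isometry $T$ has a bounded linear part $L$ and therefore extends to a weakly continuous affine map of the ambient space; this is exactly what the isometry relation above secures, and it is the hinge on which the whole argument turns.
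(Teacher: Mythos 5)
Your argument is correct, but it takes a genuinely different route from the paper's at both stages. For the first step, the paper obtains $d_T=0$ by combining Theorem \ref{thm:main} (which gives $d=\tau$) with the boundedness of $C$ (which gives $\tau=0$); your Ces\`aro computation $Tz_n-z_n=\frac{1}{n}(T^nx_0-x_0)$ reaches the same conclusion more elementarily, using affineness instead of the fixed point theorem, and produces an explicit approximate-fixed-point sequence. For the second step, the paper never discusses weak continuity of $T$: it notes that the sets $\{x:\|x-Tx\|<\epsilon\}$ are convex (by affineness) and bounded, so by reflexivity their closures are weakly compact and nested, and any point of their intersection is fixed. You instead take a weak limit $x$ of $(z_{n_j})$ and prove $Tx=x$ by showing $T$ is weak-to-weak continuous through its linear part $L$. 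That is the one place where you owe more detail: you should verify that $L$ is well defined and linear on $\mathrm{span}(C-C)$ (use that this span equals $\bigcup_{t>0}t(C-C)$ since $C-C$ is convex and symmetric, and that affineness makes the definition consistent on overlaps), and that the isometry property on $C-C$ together with homogeneity gives $\|L\|=1$ on the whole span; this is standard but not automatic. Both your proof and the paper's implicitly take $C$ closed (or pass to $\overline{C}$) so that the limiting point lies in $C$. On balance, the paper's finite-intersection argument is shorter because it never needs $L$; yours is self-contained in that it does not invoke Theorem \ref{thm:main} at all.
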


\begin{proof}
From Theorem \ref{thm:main} we have $d=\tau=0.$ Consider the sets
\[
\left\{ x:\left\Vert x-Tx\right\Vert <\epsilon\right\} .
\]
They are clearly convex when $T$ is affine and by reflexivity the
intersection of all $\epsilon>0$ is nonempty (Smulian). The points
in this intersection must be fixed points of $T$. 
\end{proof}
If one does not assume that the isometry is affine there is a celebrated
result of Maurey \cite{Ma81}, that states that in a weakly compact,
convex set of a superreflexive Banach space, every isometry has a
fixed point. 

\section{Proof of the metric fixed point theorem}

Fix $x_{0}\in X.$ We define the following family of contractions
$r_{s}:X\rightarrow X$, $0\leq s\leq1$ by $r_{s}(y)=\sigma_{x_{0}y}(s).$
Note that by the bicombing definition
\[
d(r_{s}(x),r_{s}(y))\leq sd(x,y)
\]
and $d(r_{s}(y),y)=(1-s)d(x_{0},y)$ from the constant speed geodesic
requirement.
\begin{lem}
\label{lem:completion}Let $X$ be a metric space and $Y$its metric
completion. Then $\overline{X}$ is canonically homeomorphic to $\overline{Y}.$
\end{lem}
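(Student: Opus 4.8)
The plan is to show that the two metric compactifications $\overline{X}$ and $\overline{Y}$ coincide by exhibiting a natural homeomorphism. Since $X$ is dense in its completion $Y$, and both compactifications live inside spaces of nonexpansive real-valued functions with the topology of pointwise convergence, the essential observation is that a nonexpansive function is determined by its values on a dense set. First I would fix the basepoint $x_0 \in X \subseteq Y$ and use it for both constructions, so that the embeddings $\Phi_X : X \to \mathrm{Hom}(X,\mathbb{R})$ and $\Phi_Y : Y \to \mathrm{Hom}(Y,\mathbb{R})$ are compatible. The natural candidate map is restriction: given a nonexpansive function $g$ on $Y$, restrict it to $X$ to get a nonexpansive function on $X$. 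I would argue this restriction map is a homeomorphism from $\overline{Y}$ onto $\overline{X}$.

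The key steps are as follows. First, restriction $\rho : \mathrm{Hom}(Y,\mathbb{R}) \to \mathrm{Hom}(X,\mathbb{R})$ is continuous for the pointwise topologies, since evaluation at a point $x \in X$ factors through evaluation on $Y$. Second, $\rho$ is injective on the relevant subspace: because $X$ is dense in $Y$ and every function in $\mathrm{Hom}(Y,\mathbb{R})$ is nonexpansive, hence uniformly continuous, two such functions agreeing on $X$ agree on all of $Y$ by continuity. Third, I would check that $\rho$ maps $\Phi_Y(Y)$ into $\overline{\Phi_X(X)}$: for $y \in Y$, choose a sequence $x_n \in X$ with $x_n \to y$; then $h_{x_n}$ converges pointwise on $X$ to the restriction of $h_y$, using that $d(\cdot, x_n) \to d(\cdot, y)$ by the triangle inequality and density. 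This shows $\rho(\Phi_Y(y)) \in \overline{X}$ and, more generally, that $\rho(\overline{Y}) \subseteq \overline{X}$ by continuity and compactness. Conversely, since $\Phi_X(X) = \rho(\Phi_Y(X)) \subseteq \rho(\overline{Y})$ and $\rho(\overline{Y})$ is compact hence closed, taking closures gives $\overline{X} \subseteq \rho(\overline{Y})$, so $\rho$ restricts to a continuous surjection $\overline{Y} \to \overline{X}$.

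Finally, a continuous bijection from a compact space to a Hausdorff space is automatically a homeomorphism, so once injectivity and surjectivity of $\rho|_{\overline{Y}}$ are established, the result follows. Both $\overline{X}$ and $\overline{Y}$ are compact, and the function spaces with the pointwise topology are Hausdorff, so this closing argument is routine.

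The part requiring the most care is the claim that $\rho$ restricted to $\overline{Y}$ is injective, i.e. that no two \emph{distinct} limit functionals in $\overline{Y}$ can restrict to the same functional on $X$. This is where the uniform continuity of nonexpansive functions together with the density of $X$ in $Y$ must be combined cleanly: I would argue that any $h \in \overline{Y}$ is itself nonexpansive on $Y$, so its values on $Y$ are forced by its values on the dense subset $X$ via $h(y) = \lim_n h(x_n)$ for $x_n \to y$, making the extension of $\rho(h)$ back to $Y$ unique. The only subtlety is confirming that elements of $\overline{Y}$ really are nonexpansive on all of $Y$ (not merely pointwise limits that could behave badly), but this is immediate since a pointwise limit of $1$-Lipschitz functions is $1$-Lipschitz.
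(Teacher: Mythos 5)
Your proof is correct, and it rests on the same core observation as the paper's: since $X$ is dense in $Y$ and all functions in play are nonexpansive (hence uniformly continuous, and pointwise limits of $1$-Lipschitz functions remain $1$-Lipschitz), nonexpansive functions on $X$ and on $Y$ determine one another. The packaging differs, though, in a way worth noting. The paper works with \emph{both} directions explicitly: it first extends each $h\in\mathrm{Hom}(X,\mathbb{R})$ to $Y$ via Cauchy sequences to get an inclusion $\overline{X}\hookrightarrow\overline{Y}$ (checking via a $3\epsilon$-argument that limit points are preserved), and then separately uses restriction for the reverse inclusion. You instead use only the restriction map $\rho$ and close the argument with the standard fact that a continuous bijection from a compact space to a Hausdorff space is a homeomorphism; surjectivity comes for free from $\Phi_X(X)=\rho(\Phi_Y(X))$ together with compactness (so $\rho(\overline{Y})$ is closed and contains a dense subset of $\overline{X}$), and injectivity from density plus nonexpansiveness. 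This is arguably cleaner: it avoids having to verify continuity of the extension map and the $3\epsilon$-argument, trading them for a purely topological closing step. Both arguments are complete; yours is a tidier route to the same homeomorphism.
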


\begin{proof}
We can consider $X$ as a subset of $Y$ in the natural way and select
a basepoint $x_{0}\in X$ for both spaces. First note that in a canonical
way $\mathrm{Hom}(X,\mathbb{R})\hookrightarrow\mathrm{Hom}(Y,\mathbb{R})$
since any element $\mathrm{h\in Hom}(X,\mathbb{R})$ has uniquely
defined values on $Y$. Indeed, for any sequence $x_{n}\rightarrow y$,
\[
\left|h(x_{n})-h(x_{m})\right|\leq d(x_{n},x_{m})
\]
which makes $h(x_{n})$ a Cauchy sequence of real numbers, hence converging,
which is the thus well-defined value $h(y)$. Similarly, by a $3\epsilon$-argument,
it follows that every limit point in $\mathrm{Hom}(X,\mathbb{R})$
is also a limit point in $\mathrm{Hom}(Y,\mathbb{R})$. This shows
that $\overline{X}\hookrightarrow\overline{Y}.$ 

Second, every element of $\mathrm{Hom}(Y,\mathbb{R})$ is also an
element of $\mathrm{Hom}(X,\mathbb{R})$ by restriction. This also
implies that any limit point $h$ in $\overline{Y}$ is also a metric
functional in $X$ by approximation and restriction. Indeed, say that
$h$ is a limit point of a set $\left\{ h_{y}\right\} ,$ then we
can approximate each $y$ by a Cauchy sequence in $x$. This shows
the opposite continuous inclusion and the lemma is established.
\end{proof}
In view of the lemma, and that we can also extend our maps $r_{s}$
for the same reasons since they are also nonexpansive maps, we may
for simplicity assume that $X$ is complete.

The map $Tr_{s}=T\circ r_{s}$ is a uniformly strict contraction if
$s<1$ since $T$ preserves distances. By the contraction mapping
principle we thus have a unique fixed point $y_{s}\in X$ for this
map.

Here is a lemma that applies even with $T$ is merely nonexpansive:
\begin{lem}
\label{lem:fixedpoint}Let $y_{s}$ be the unique fixed point of $T\circ r_{s}$
for $0\leq s<1$. Then
\[
d(x_{0},y_{s})\leq\frac{d(x_{0},Tx_{0})}{1-s}.
\]
 
\end{lem}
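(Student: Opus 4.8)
The plan is to convert the defining fixed-point equation for $y_s$ into a self-referential estimate for $d(x_0,y_s)$, and then absorb the term involving $y_s$ using that the contraction factor $s$ is strictly less than $1$. The only input I need from the contraction mapping principle is the relation it produces: since $y_s$ is the fixed point of $T\circ r_s$, we have $y_s = T(r_s(y_s))$.

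Next I would estimate $d(x_0,y_s)$ by inserting $Tx_0$ and applying the triangle inequality:
\[
d(x_0,y_s) = d(x_0, T(r_s(y_s))) \le d(x_0,Tx_0) + d(Tx_0, T(r_s(y_s))).
\]
Because $T$ is an isometric embedding, the last term equals $d(x_0, r_s(y_s))$. The key geometric fact is then that $r_s(y_s)=\sigma_{x_0 y_s}(s)$ lies on the distinguished constant-speed geodesic from $x_0$ to $y_s$, so the constant-speed property gives $d(x_0, r_s(y_s)) = s\,d(x_0,y_s)$; alternatively one can use $r_s(x_0)=x_0$ together with the bicombing contraction bound $d(r_s(x_0),r_s(y_s))\le s\,d(x_0,y_s)$, which yields the same inequality.

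Combining these observations produces the self-referential inequality $d(x_0,y_s) \le d(x_0,Tx_0) + s\,d(x_0,y_s)$, and rearranging (legitimate precisely because $1-s>0$) gives the claimed bound
\[
d(x_0,y_s) \le \frac{d(x_0,Tx_0)}{1-s}.
\]
There is no substantial obstacle here beyond the idea of bounding $d(x_0,y_s)$ in terms of itself. I note that the argument uses only that $T$ preserves distances, so it goes through verbatim when $T$ is merely nonexpansive (consistent with the remark preceding the lemma), and that the entire estimate hinges on $s<1$ so that the term $s\,d(x_0,y_s)$ can be absorbed into the left-hand side.
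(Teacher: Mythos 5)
Your proof is correct, and it reaches the bound by a genuinely more direct route than the paper. You substitute the fixed-point identity $y_s = T(r_s(y_s))$ straight into the triangle inequality, use the isometric (or merely nonexpansive) property of $T$ and the constant-speed/conical property of the bicombing to get $d(x_0,y_s)\le d(x_0,Tx_0)+s\,d(x_0,y_s)$, and absorb the last term since $s<1$ and the distance is finite. The paper instead argues with invariant balls: it shows $T r_s(B_R(x_0))\subseteq B_{sR+D}(x_0)\subseteq B_R(x_0)$ once $R\ge D/(1-s)$, and then invokes the fact that the iterates $(Tr_s)^n x$ converge to the unique fixed point to conclude $y_s$ lies in that invariant ball. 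Your version needs only the fixed-point \emph{equation}, not the convergence of iterates, and so is slightly more economical; the paper's ball argument has the mild advantage of making the invariant-set picture explicit, which is the geometric intuition behind where $y_s$ can live. Both arguments use exactly the same two ingredients (the triangle inequality through $Tx_0$ and the factor $s$ from the bicombing), and both, as you note, work verbatim for nonexpansive $T$, consistent with the paper's remark that the lemma holds in that generality.
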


\begin{proof}
Let $D=d(x_{0},Tx_{0})$. Consider the ball of radius $R$ with center
$z$ 
\[
B_{R}(z)=\left\{ x\in X:d(x,z)\leq R\right\} .
\]
Note that from the bicombing 
\[
r_{s}(B_{R}(x_{0}))=B_{sR}(x_{0}).
\]
From the isometric property of $T$ obviously $T(B_{R}(x_{0}))\subseteq B_{R}(Tx_{0})$
for any $R$. Hence 
\[
Tr_{s}(B_{R}(x_{0}))\subseteq B_{sR}(Tx_{0}).
\]
Note that $B_{R}(Tx_{0})\subseteq B_{R+D}(x_{0})$ by the triangle
inequality. This means that if we take $R\geq D/(1-s)$ so that $sR+D\leq R$,
then 
\[
Tr_{s}(B_{R}(x_{0}))\subseteq B_{sR}(Tx_{0})\subseteq B_{sR+D}(x_{0})\subseteq B_{R}(x_{0}).
\]
From the contraction mapping principle, the iterates of the map $Tr_{s}$
converge towards the unique fixed point $y_{s}$. That is, for any
$x$, $(Tr_{s})^{n}x\rightarrow y_{s}$ as $n\rightarrow\infty$.
Since $B_{R}(x_{0})$ is an invariant set for $R=D/(1-s)$ we must
have that $y_{s}\in B_{R}(x_{0})$ as required.
\end{proof}
To get the first inequality we follow the proof of Gaubert-Vigeral
\cite{GV12} which even applies to nonexpansive maps $T$. For any
$x$ 
\[
h_{y_{s}}(x)-h_{y_{s}}(Tx)=d(x,y_{s})-d(Tx,y_{s})=d(x,y_{s})-d(Tx,T(r_{s}y_{s}))\geq d(x,y_{s})-d(x,r_{s}y_{s})
\]

\[
\geq d(x,y_{s})-d(x,r_{s}x)-d(r_{s}x,r_{s}y_{s})\geq d(x,y_{s})-d(x,r_{s}x)-sd(x,y_{s})=(1-s)d(x,y_{s})-d(x,r_{s}x)\geq
\]

\[
\geq(1-s)(d(x_{0},y_{s})-d(x_{0},x))-d(x,r_{s}x)=d(r_{s}y_{s},y_{s})-(1-s)d(x_{0},x)-d(x,r_{s}x)\geq
\]

\[
\geq d(Tr_{s}y_{s},Ty_{s})-(1-s)d(x_{0},x)-d(x,r_{s}x)=d(y_{s},Ty_{s})-(1-s)d(x_{0},x)-d(x,r_{s}x)\geq
\]

\[
\geq d(T)-(1-s)d(x_{0},x)-d(x,r_{s}x).
\]
The last two terms go to $0$ as $s\rightarrow1$. By compactness
there is a limit point of $h_{y_{s}}$ as $s$ approaches $1$ and
for any such limit point $h$ the above inequality shows that 
\[
h(x)-h(Tx)\geq d.
\]

So we have obtained $h$ with the property that $h(Tx)\leq h(x)-d$
for all $x\in X$. 

On the other hand, to get the other inequality we use that $T$ is
isometric,
\[
h_{y_{s}}(x)=d(x,y_{s})-d(x_{0},y_{s})=d(Tx,Ty_{s})-d(x_{0},y_{s})\leq
\]

\[
\leq d(Tx,y_{s})+d(y_{s},Ty_{s})-d(x_{0},y_{s})=d(Tx,y_{s})+d(Tr_{s}y_{s},Ty_{s})-d(x_{0},y_{s})=
\]

\[
=h_{y_{s}}(Tx)+d(r_{s}y_{s},y_{s})=h_{y_{s}}(Tx)+(1-s)d(x_{0},y_{s})\leq h_{y_{s}}(Tx)+(1-s)d(x_{0},Tx_{0})/(1-s)=
\]

\[
=h_{y_{s}}(Tx)+d(x_{0},Tx_{0}),
\]
where the last inequality comes from Lemma \ref{lem:fixedpoint}.
This means that for any $\epsilon>0$ we can find a metric functional
$h$ (using $x_{0}$ such that $d(x_{0},Tx_{0})<d+\epsilon$) such
that 
\[
h(x)\leq h(Tx)+d+\epsilon
\]
together with $h(x)\leq h(Tx)-d$ both inequalities holding for all
$x$. This applies to any limit point in particular the $h$ found
above. (Recall the above discussion about changing the base point,
it does not influence differences $h(Tx)-h(x)$). By compactness passing
to a limit point, there is a metric functional $h$ such that $h(x)\leq h(Tx)+d$
and $h(Tx)\leq h(x)-d$ for all $x\in X$, in other words 
\[
h(x)-d\leq h(Tx)\leq h(x)-d,
\]
hence equality for all $x\in X$. In case $T$ is an isometry, previous
remarks show that $Th=h$.

Suppose that $h=h_{y}$ for some point $y\in X$. Then $h_{y}(Tx)=h_{y}(x)-d,$
that is, 
\[
d(y,Tx)-d(y,x)=-d
\]
 for all $x$. Applying it to $x=y$, forces $d=0$ and $d(y,Ty)=0$,
since distances are positive or $0.$ By the standard axiom for metric
spaces it means that $Ty=y.$ 

With all this, Theorem \ref{thm:main} is proved.

\section{Mean ergodic theorems\label{sec:Mean-ergodic-theorems}}

In 1931 von Neumann proved the first ergodic theorem after being inspired
by remarks of Koopman and Weil.  Around the same time Carleman had
similar ideas, and as Eisner pointed out to me von Neumann subsequently
published a note analyzing Carleman's work on the topic. For references
and a good general discussion on this subject can be found in the
book by Eisner \cite{Ei10}. The mean ergodic theorem is proved in
virtually every book on ergodic theory. It is also well-known that
the operator version of the mean ergodic theorem fails in general
Banach spaces such as $\ell^{1}$ and $\ell^{\infty}$. From the metric
fixed point theorem, we deduce a new result that in contrast holds
in general: 
\begin{cor}
\emph{\label{cor:mean-1}(Mean ergodic theorem in Banach spaces)}
Let $U$ be a norm-preserving linear operator of a Banach space $X$
and $v\in X$. Then there exists a metric functional $h$ such that
\[
\frac{1}{n}h\left(\sum_{k=0}^{n-1}U^{k}v\right)=-\inf_{x}\left\Vert Ux+v-x\right\Vert 
\]
for all $n\geq1.$ 
\end{cor}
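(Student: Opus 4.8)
The plan is to reduce the statement to Theorem~\ref{thm:main} applied to a suitable affine isometric embedding. First I would set $Tx = Ux + v$ and observe that $T$ is an isometric embedding of $X$: since $U$ is linear and norm-preserving, $\left\Vert Tx - Ty\right\Vert = \left\Vert U(x-y)\right\Vert = \left\Vert x-y\right\Vert$. As noted in the introduction, a Banach space carries a weak conical bicombing given by line segments, so the hypotheses of Theorem~\ref{thm:main} are met.

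Theorem~\ref{thm:main} then yields a metric functional $h \in \overline{X}$ with
\[
h(Tx) = h(x) - d \qquad \text{for all } x \in X,
\]
where $d = \inf_x d(x, Tx) = \inf_x \left\Vert Ux + v - x\right\Vert$, which is precisely the quantity on the right-hand side of the claimed identity. The next step is to iterate this relation: applying it $n$ times (each time replacing $x$ by $Tx$, which is again a point of $X$) gives $h(T^n x) = h(x) - nd$ for every $n \geq 1$ and every $x$. A direct induction on the affine map shows $T^n x = U^n x + \sum_{k=0}^{n-1} U^k v$, so in particular $T^n 0 = \sum_{k=0}^{n-1} U^k v$.

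Finally I would run the construction with the basepoint chosen at the origin, $x_0 = 0$. Every metric functional satisfies $h_x(x_0) = 0$ by definition, and this normalization passes to pointwise limits, so $h(0) = 0$. Evaluating the iterated relation at $x = 0$ then gives
\[
h\left(\sum_{k=0}^{n-1} U^k v\right) = h(T^n 0) = h(0) - nd = -nd,
\]
and dividing by $n$ produces the asserted equality for all $n \geq 1$. The argument is short once Theorem~\ref{thm:main} is in hand; the only point requiring care is the bookkeeping of the basepoint, namely that choosing $x_0 = 0$ forces $h(0) = 0$ so that the constant term drops out cleanly. This is where I expect the only real friction, since the functional from Theorem~\ref{thm:main} is a priori only determined up to the choice of basepoint, and the clean factor $-nd$ with no additive correction depends on that choice being the origin.
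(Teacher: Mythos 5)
Your proposal is correct and follows essentially the same route as the paper: the paper's proof likewise takes the line-segment bicombing, observes that $x\mapsto Ux+v$ is isometric with $T^{n}0=\sum_{k=0}^{n-1}U^{k}v$, and iterates the identity $h(Tx)=h(x)-d$ from Theorem \ref{thm:main}. Your extra care about the basepoint (taking $x_{0}=0$ so that $h(0)=0$) is a valid and worthwhile clarification of a step the paper leaves implicit.
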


\begin{proof}
The line segments in the Banach space provide the required bicombing.
From the assumption $x\mapsto Ux+v$ is an isometric map and the sum
in the statement is the application of this map $n$ times to $x=0$.
The result now follows directly from Theorem \ref{thm:main} by iterating
the map. 
\end{proof}
In case of a Hilbert space one can deduce the usual mean ergodic theorem
in a quite different way to the standard proofs:
\begin{cor}
\label{cor:vonNeumann-1}Let $U$ be a unitary operator of a Hilbert
space $H$ and $v\in H$. Then the strong limit
\[
\lim_{n\rightarrow\infty}\frac{1}{n}\sum_{k=0}^{n-1}U^{k}v=\pi_{U}(v),
\]
where $\pi_{U}$ is the projection onto the $U$-invariant vectors. 
\end{cor}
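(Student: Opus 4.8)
The plan is to feed the affine isometry $f(x)=Ux+v$ of $H$ into Corollary~\ref{cor:mean-1} and then translate the resulting metric statement back into Hilbert space geometry, where the Pythagorean theorem upgrades it to strong convergence. First I would record that $f$ is an isometry (line segments give the bicombing) with $f^{n}(0)=S_{n}:=\sum_{k=0}^{n-1}U^{k}v$, so that Corollary~\ref{cor:mean-1} (equivalently Theorem~\ref{thm:main} applied to $f$, with basepoint $x_{0}=0$) produces a metric functional $h$ with $h(0)=0$ and $\tfrac1n h(S_{n})=-d$ for all $n$, where $d=\inf_{x}\|Ux+v-x\|$.

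Next I would identify $d$ with a projection norm. Since $\inf_{x}\|Ux+v-x\|=\inf_{x}\|(U-I)x+v\|=\operatorname{dist}(v,\overline{\operatorname{ran}(U-I)})$ and $U$ is unitary, the standard relation $\overline{\operatorname{ran}(U-I)}^{\perp}=\ker(U^{*}-I)=\ker(U-I)$ shows that this orthogonal complement is precisely the space of $U$-invariant vectors; hence $d=\|\pi_{U}v\|$.

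The heart of the argument is to prove $\tfrac1n\|S_{n}\|\to d$. The lower bound is exactly where the metric functional does its work: since $h$ is nonexpansive with $h(0)=0$, one has $\|S_{n}\|\ge|h(S_{n})-h(0)|=nd$, i.e. $\tfrac1n\|S_{n}\|\ge d$ for every $n$. For the reverse, the sequence $d(0,f^{n}(0))=\|S_{n}\|$ is subadditive, so its normalized limit $\tau$ exists, and the general inequality $\tau\le d$ recorded in the metric preliminaries gives $\limsup_{n}\tfrac1n\|S_{n}\|\le d$; together these yield $\tfrac1n\|S_{n}\|\to d=\|\pi_{U}v\|$ (this is just the equality $\tau=d$ forced whenever Theorem~\ref{thm:main} applies).

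Finally I would close with orthogonality. Writing $v=\pi_{U}v+w$ with $w\perp\ker(U-I)$ and $a_{n}:=\tfrac1n S_{n}$, unitarity gives $\langle U^{k}w,\pi_{U}v\rangle=\langle w,U^{-k}\pi_{U}v\rangle=\langle w,\pi_{U}v\rangle=0$, so $a_{n}-\pi_{U}v=\tfrac1n\sum_{k=0}^{n-1}U^{k}w$ is orthogonal to $\pi_{U}v$ and the Pythagorean theorem yields $\|a_{n}\|^{2}=\|\pi_{U}v\|^{2}+\|a_{n}-\pi_{U}v\|^{2}$. Since $\|a_{n}\|=\tfrac1n\|S_{n}\|\to\|\pi_{U}v\|$ by the previous step, the defect satisfies $\|a_{n}-\pi_{U}v\|^{2}=\|a_{n}\|^{2}-\|\pi_{U}v\|^{2}\to0$, which is the asserted strong convergence $a_{n}\to\pi_{U}v$. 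The main obstacle is the sharp lower bound $\tfrac1n\|S_{n}\|\ge d$: this is the feature distinguishing the Hilbert case from a general Banach space, where only the weaker Corollary~\ref{cor:mean-1} survives, and it is supplied precisely by the nonexpansive invariant metric functional of Theorem~\ref{thm:main}.
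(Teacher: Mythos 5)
Your proof is correct, but it follows a different path from the paper's. The paper's argument leans on the structure theory of metric functionals of Hilbert space: citing \cite{K22}, it asserts that the nontrivial invariant functional must be \emph{linear}, extracts from it a unit vector $w$ with $\bigl(\tfrac1n\sum_{k=0}^{n-1}U^{k}v,\,w\bigr)=d$ for all $n$, deduces strong convergence to $d\cdot w$ from $\|a_{n}\|\to d$ together with $\|w\|=1$ (the same parallelogram-law squeeze you use), and only at the end identifies $d\cdot w$ with $\pi_{U}(v)$ via $(x-Ux-v,w)\equiv d$. You instead perform the orthogonal decomposition $H=\ker(U-I)\oplus\overline{\operatorname{ran}(U-I)}$ at the outset, identify $d=\operatorname{dist}(v,\overline{\operatorname{ran}(U-I)})=\|\pi_{U}v\|$ by elementary functional analysis, and close with Pythagoras; this avoids any appeal to the classification of Hilbert-space metric functionals and is entirely self-contained, which is a genuine advantage. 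The trade-off is that the metric machinery becomes nearly decorative in your version: your Pythagorean identity $\|a_{n}\|^{2}=\|\pi_{U}v\|^{2}+\|a_{n}-\pi_{U}v\|^{2}$ already yields the lower bound $\|a_{n}\|\ge d$ without the functional $h$, so the only input you really need from the metric side is the elementary subadditivity bound $\tau\le d$ --- in effect you have rederived the classical proof, whereas the paper's proof is designed to exhibit the invariant metric functional itself as the object encoding the limit $\pi_{U}(v)$. Both are valid; just be aware that your "heart of the argument" (the lower bound via $h$) is the one step that was never in danger.
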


\begin{proof}
Either the limit exists equalling $0$, or $\tau=d>0$ (see section
2) and Corollary \ref{cor:mean-1} gives a nontrivial metric functional,
which in this case for a Hilbert space must be linear, see \cite{K22}.
Hence there is a unit vector $w$ such that for all $n\geq1$ the
scalar product
\[
\left(\frac{1}{n}\sum_{k=0}^{n-1}U^{k}v,w\right)=d,
\]
where $d=\inf_{x}\left\Vert Ux+v-x\right\Vert $. Considering that
$U$ is norm-preserving, $\tau=d$, and $\left\Vert w\right\Vert =1$
it is a simple Hilbert space fact that this implies that the sum in
the statement converges strongly to $d\cdot w$ (see \cite{K22}).
From Theorem \ref{thm:main} we in addition have for any $x$ that
$(x-Ux-v,w)=\inf_{x}\left\Vert x-Ux-v\right\Vert =d$. Since the closed
linear span of vectors $\left\{ x-Ux\right\} $ is orthogonal to the
$U$-invariant vectors, the projection of $v$ onto the latter subspace
is $d\cdot w$. 
\end{proof}
A linear operator $A$ of a Banach space is \emph{power bounded} if
\[
\sup_{k>0}\left\Vert A^{k}\right\Vert <\infty.
\]
 Mean ergodic theorems have been considered for such operators for
example in the works of Kakutani and Yosida. Parallel to the above
discussion, the usual mean ergodic formulation fails in general but
there is now instead a metric replacement: 
\begin{cor}
\emph{(Power bounded ergodic theorem)} Let $A$ be a power bounded
operator of a Banach space $X$. Let
\[
\left\Vert x\right\Vert _{1}:=\sup_{k>0}\left\Vert A^{k}x\right\Vert 
\]
 for $x\in X$. Then for any $v\in X$ there is a metric functional
$h$ of $X$ with the norm $\left\Vert \cdot\right\Vert _{1}$ such
that
\[
h\left(\sum_{k=0}^{n-1}A^{k}v\right)\leq-n\tau
\]
for all $n>0$ and where $\tau=\inf_{x}\left\Vert Ax+v-x\right\Vert _{1}$. 
\end{cor}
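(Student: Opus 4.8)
The plan is to renorm $X$ so that the affine map $x\mapsto Ax+v$ becomes nonexpansive, then to invoke the nonexpansive half of the proof of Theorem \ref{thm:main} (the Gaubert--Vigeral inequality, which as noted there requires only nonexpansiveness, not isometry), and finally to telescope the resulting inequality. The essential point is that power-boundedness is exactly what is needed to turn $A$ into a nonexpansive map for the auxiliary norm $\left\Vert\cdot\right\Vert_1$.

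First I would record that $\left\Vert\cdot\right\Vert_1$ is a norm equivalent to $\left\Vert\cdot\right\Vert$: power-boundedness gives $\left\Vert A^k x\right\Vert\le C\left\Vert x\right\Vert$ for a uniform $C$, so $\left\Vert x\right\Vert_1\le C\left\Vert x\right\Vert<\infty$, while reading the supremum so as to include $A^0=I$ gives $\left\Vert x\right\Vert_1\ge\left\Vert x\right\Vert$. The crucial observation is that $A$ is nonexpansive for this norm, since $\left\Vert Ax\right\Vert_1=\sup_{k\ge1}\left\Vert A^k x\right\Vert\le\sup_{k\ge0}\left\Vert A^k x\right\Vert=\left\Vert x\right\Vert_1$. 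Hence the affine map $Tx:=Ax+v$ satisfies $\left\Vert Tx-Ty\right\Vert_1=\left\Vert A(x-y)\right\Vert_1\le\left\Vert x-y\right\Vert_1$, so $T$ is a nonexpansive self-map of the normed space $(X,\left\Vert\cdot\right\Vert_1)$, whose line segments provide a weak conical bicombing (indeed $d(\sigma_{xy}(t),\sigma_{xy'}(t))=t\,d(y,y')$).

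Next I would apply the nonexpansive portion of the argument of Section 3. That computation produces, for any nonexpansive $T$, a metric functional $h\in\overline{X}$ with $h(Tx)\le h(x)-d$ for all $x$, where $d=\inf_x d(x,Tx)$; here $d=\inf_x\left\Vert x-(Ax+v)\right\Vert_1=\inf_x\left\Vert Ax+v-x\right\Vert_1=\tau$. If $(X,\left\Vert\cdot\right\Vert_1)$ were not complete, Lemma \ref{lem:completion} lets me carry out this step in the completion, to which $T$ extends by uniform continuity, and then restrict $h$ back to $X$ without affecting its values at the points of interest, which lie in $X$. Choosing the basepoint $x_0=0$ forces $h(0)=0$ for every metric functional, since $h_y(0)=\left\Vert y\right\Vert_1-\left\Vert y\right\Vert_1=0$ and this passes to pointwise limits. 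Iterating the inequality from $x=0$, and using that $T^n0=\sum_{k=0}^{n-1}A^k v$, I obtain
\[
h\!\left(\sum_{k=0}^{n-1}A^k v\right)=h(T^n0)\le h(T^{n-1}0)-\tau\le\cdots\le h(0)-n\tau=-n\tau,
\]
which is precisely the assertion.

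The only genuine subtlety, and the point I would be most careful about, is the reliance on the one-sided (nonexpansive) inequality rather than the full equality of Theorem \ref{thm:main}. Because $A$ is merely power-bounded and not isometric, $T$ is nonexpansive but not an isometric embedding, so the reverse inequality is unavailable and one cannot expect an exact invariance $h(Tx)=h(x)-\tau$. This is harmless here, since only the upper bound $h(\sum_{k=0}^{n-1}A^k v)\le -n\tau$ is claimed; I would therefore make sure to cite the part of the proof of Theorem \ref{thm:main} that is valid for nonexpansive maps, and to confirm that the orbit $T^n0$ telescopes to the stated partial sum.
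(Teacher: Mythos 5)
Your proposal is correct and takes essentially the same route as the paper: the paper's proof likewise observes that $\left\Vert \cdot\right\Vert _{1}$ is a norm for which $x\mapsto Ax+v$ is nonexpansive and then invokes the first (nonexpansive, Gaubert--Vigeral) part of the proof of Theorem \ref{thm:main}. You merely supply the details the paper leaves implicit, such as the telescoping of $h(T^{n}0)$ and the identification $T^{n}0=\sum_{k=0}^{n-1}A^{k}v$, all of which check out.
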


\begin{proof}
Note that $\left\Vert \cdot\right\Vert _{1}$ is a norm and that $x\mapsto Ax+v$
is nonexpansive in the corresponding metric. The statement then follows
from the first part of the proof of the metric fixed point theorem. 
\end{proof}

\section{Some remarks on CAT(0), injective spaces, and $\ell^{\infty}$}

The following remarks are related to the main topic of this paper,
and although perhaps not new, some of them seems not to be stated
in the literature.

\textbf{\noun{CAT(0)-spaces}} Let $X$ be a proper CAT(0)-space. It
is shown in \cite[Theorem 8.13]{Gr81,BH99} that the metric compactification
is topologically equivalent to the visual compactification. Let $g$
be an isometry of $X$ with fixed point $h$ from Theorem \ref{thm:main}.
In case h is associated with a bounded sequences, then $h=h_{y}$
for some point $y\in X$ and then as shown above $gy=y$ (the usual
argument uses a circumcenter of the bounded orbit, see \cite[Corollary 2.8]{BH99}).
In the case $h\in\partial X$ then $h$ corresponds precisely to a
equivalence class of geodesic rays, and hence this is the fixed point
again in the traditional sense. Although this latter case is a simple
fact in view of the following argument: If the orbit of $g$ is unbounded,
let $g^{n_{k}}x_{0}$ be a convergent sequence to a boundary point
$\gamma.$ Note then that for any isometry $g'$ commuting with $g$
(for example $g$ itself) fixes this boundary point:
\[
g'(\gamma)=g'(\lim_{k}g^{n_{k}}x_{0})=\lim_{k}g'g^{n_{k}}(x_{0})=\lim_{k}g^{n_{k}}(g'(x_{0}))=\gamma,
\]
where the last equality comes from the fact that any two sequences
staying on bounded distance from each other (in the present case $d(g'x_{0},x_{0})$)
have the same accumulation points. Thus:
\begin{prop}
Let $X$ be a proper CAT(0)-space and $g$ an isometry with unbounded
orbits. The centralizer of $g$ fixes a point in the visual boundary
of $X$. 
\end{prop}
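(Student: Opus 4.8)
The plan is to make rigorous the computation displayed immediately above the statement. Fix a base point $x_0\in X$. Since $g$ has unbounded orbits, $d(x_0,g^nx_0)\to\infty$ along a subsequence, and since $X$ is proper its visual compactification $\overline{X}$ is compact (and, as noted above, agrees with the metric compactification). Passing to a further subsequence we may therefore assume $g^{n_k}x_0\to\gamma$ for some $\gamma\in\partial X$. The goal is to show that every $g'$ in the centralizer $Z(g)=\{g':g'g=gg'\}$ fixes $\gamma$.

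First I would use that any isometry of $X$ extends to a homeomorphism of $\overline{X}$, as recorded in the preliminaries. Hence $g'$ acts continuously on $\overline{X}$ and
\[
g'\gamma=g'\bigl(\lim_k g^{n_k}x_0\bigr)=\lim_k g'g^{n_k}x_0=\lim_k g^{n_k}(g'x_0),
\]
where the last equality uses that $g'$ commutes with $g$, hence with each power $g^{n_k}$. Thus everything reduces to identifying $\lim_k g^{n_k}(g'x_0)$ with $\gamma$.

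The crux is therefore to show $g^{n_k}(g'x_0)\to\gamma$ as well. Since each $g^{n_k}$ is an isometry, $d(g^{n_k}x_0,g^{n_k}(g'x_0))=d(x_0,g'x_0)=:C$ is constant in $k$, so the two sequences remain within bounded distance $C$ while both tend to infinity. The required fact --- already invoked informally in the text above --- is that two such sequences converge to the same point of the visual boundary. This is exactly where the CAT(0) hypothesis is used: writing $\ell_k=d(x_0,g^{n_k}x_0)$ and comparing the geodesic segments $[x_0,g^{n_k}x_0]$ and $[x_0,g^{n_k}(g'x_0)]$, convexity of the metric along geodesics bounds the distance between the two points at a fixed arc-length $s$ by a quantity of order $Cs/\ell_k\to0$. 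Hence the two segments converge uniformly on bounded initial portions to the same geodesic ray, so they define the same element of $\partial X$. (Alternatively one cites this standard property of the visual compactification directly.) Combining with the previous step gives $g'\gamma=\gamma$ for every $g'\in Z(g)$, which is the assertion.

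I expect the only genuine content to lie in the last step; the first two are formal consequences of the compactness of $\overline{X}$ and of $g'$ being a homeomorphism that commutes with the powers of $g$. The subtlety worth flagging is that the crude nonexpansiveness bound $|h_{g^{n_k}x_0}-h_{g^{n_k}(g'x_0)}|\le 2C$ on the associated metric functionals does not by itself force a common limit, so one really needs the geometric convexity estimate (equivalently, the comparison-triangle property of CAT(0) spaces) rather than mere nonexpansiveness.
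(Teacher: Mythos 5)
Your proposal is correct and follows essentially the same route as the paper: extract a subsequence $g^{n_k}x_0\to\gamma\in\partial X$ by properness, use continuity of the extended action and commutativity to reduce to showing that $g^{n_k}(g'x_0)$ has the same limit, and conclude from the fact that two unbounded sequences at bounded distance share their accumulation points in the visual boundary. The only difference is that you supply the convexity estimate justifying this last fact, which the paper simply cites as a standard property of the visual compactification.
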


As noted previously, without properness, the metric compactification
is different from the visual bordification (which can be empty even
for unbounded spaces). Therefore Corollary \ref{cor:cat0} is of interest
for groups of isometries of nonproper spaces. Important examples of
isometric actions on nonproper CAT(0)-space are provided by the fundamental
Cremona groups of birational transformations of varieties. In the
two variable case this action was found by Manin and Zariski, and
greatly exploited for example by Cantat in \cite{Ca11}. More recently,
actions by the other Cremona groups by isometry on certain CAT(0)-cube
complexes were constructed by Lonjou and Urech \cite{LU21}. Fixed
points of individual isometries play a crucial role for these studies
and the translation length are related to algebraic notions. 

In this context, it may be worthwhile to point out the following:
\begin{thm}
\label{thm:cat0}Let $g$ be an isometry of a complete CAT(0)-space.
If $\inf_{x}d(x,gx)>0$, then $g$ has a fixed point in the visual
bordification. The metric functional corresponding to this point is
fixed and is a Busemann horofunction. It is moreover the unique horofunction
such that
\[
h(gx)=h(x)-d_{g},
\]
and for the geodesic ray $\gamma$ corresponding to $h$ it holds
that 
\[
\frac{1}{n}d(g^{n}x_{0},\gamma(d_{g}n))\rightarrow0.
\]
\end{thm}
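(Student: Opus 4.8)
The plan is to upgrade the invariant metric functional furnished by Theorem \ref{thm:main} into an honest Busemann function by constructing the corresponding geodesic ray, and then to read off the displacement identity, the asymptotic estimate, and uniqueness from the two basic CAT(0) tools: convexity of the distance between geodesics, and the comparison (Stewart) inequality in the Euclidean comparison triangle. Writing $a_n = d(x_0, g^n x_0)$, Theorem \ref{thm:main} (isometry case, $d>0$) produces a functional $h$ with $h(g^n x_0) = h(x_0) - nd$, so that $a_n \ge nd$ because $h$ is $1$-Lipschitz; combined with subadditivity of $a_n$ and the equality $\tau = d$ recorded in Section 2, this gives $a_n/n \to d$, i.e. the defect $b_n := a_n - nd \ge 0$ is nonnegative and sublinear, and the orbit is unbounded. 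Recall also that every metric functional vanishes at $x_0$.

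First I would build the ray. Let $\gamma_n \colon [0,a_n] \to X$ be the unit-speed geodesic from $x_0$ to $g^n x_0$. By CAT(0) convexity the function $t \mapsto d(\gamma_n(t),\gamma_m(t))$ is convex and vanishes at $t=0$, so for $t \le a_n \le a_m$ one has $d(\gamma_n(t),\gamma_m(t)) \le (t/a_n)\, d(g^n x_0, \gamma_m(a_n))$; and this far-end distance is governed, through the comparison triangle on the vertices $x_0, g^n x_0, g^m x_0$ (whose side lengths are $a_n, a_{m-n}, a_m$), by the subadditive defect — for $m = 2n$ Stewart's theorem bounds its square by $a_n(2b_n - b_{2n})$. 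The main obstacle lies precisely here: $b_n$ is only sublinear, not small, so the $\gamma_n$ need not be Cauchy for arbitrary indices, and a naive dyadic chaining of the above estimate need not converge. This is the deterministic ergodic-theoretic heart of the statement; I expect it to be resolved exactly as in Karlsson's earlier work \cite{K01,KM99}, by selecting a sequence $n_k \to \infty$ along which the relevant defects are controlled relative to $a_{n_k} \approx n_k d$. Completeness of $X$ then delivers a limiting geodesic ray $\gamma$ issuing from $x_0$.

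With $\gamma$ in hand, the comparison inequality applied to $x_0, g^n x_0$ and $\gamma(T)$ as $T \to \infty$ gives the clean bound $d(g^n x_0, \gamma(nd))^2 \le a_n^2 - (nd)^2 = b_n(a_n + nd) \le 2 b_n a_n$, whence $\tfrac1n d(g^n x_0, \gamma(nd)) \to 0$: this is the asserted asymptotic estimate, and it shows that $g^n x_0$ tracks $\gamma$. Consequently $t \mapsto d(g\gamma(t),\gamma(t))$ is a nonnegative convex function of sublinear growth, hence bounded, so $g\gamma$ is asymptotic to $\gamma$ and $g$ fixes the boundary point $[\gamma]$. Evaluating the translation constant along the orbit identifies the Busemann function $b_\gamma$ as a horofunction satisfying $b_\gamma(gx) = b_\gamma(x) - d$, that is $g b_\gamma = b_\gamma$. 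Since in a CAT(0) space Gromov's bordification agrees with the visual one and its boundary points are exactly the Busemann functions \cite{Gr81,BH99}, this is the claimed fixed point in the visual bordification, and it is a Busemann horofunction.

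Finally, for uniqueness, let $h'$ be any horofunction with $h'(gx) = h'(x) - d$; as $d>0$ it cannot be a point functional $h_y$ (that would force $d=0$, as in the last line of Theorem \ref{thm:main}), so $h' = b_{\gamma'}$ for some ray $\gamma'$ from $x_0$. Then $b_{\gamma'}(g^n x_0) = -nd$, and the same comparison bound forces $\tfrac1n d(g^n x_0, \gamma'(nd)) \to 0$. Thus $\gamma$ and $\gamma'$ both start at $x_0$ and track the orbit with sublinear error, so $t \mapsto d(\gamma(t),\gamma'(t))$ is convex, vanishes at $0$, and is $o(t)$, hence identically $0$; therefore $\gamma' = \gamma$ and $h' = b_\gamma$. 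In summary, the ray construction (Step 2), i.e. taming the sublinear defect $b_n$, is the only genuinely hard point, the remaining assertions following quickly from CAT(0) convexity and comparison.
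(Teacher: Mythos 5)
Your proposal is correct and follows essentially the same route as the paper: both reduce the existence of the tracking ray to the ray construction of \cite{KM99} (applied once one knows $\tau_g=d_g>0$), then fix the boundary point by the bounded-distance/commuting-isometry argument, identify the invariant Busemann function, and get uniqueness from sublinear tracking plus convexity of the distance between rays. One small caveat: your ``clean'' comparison bound $d(g^{n}x_{0},\gamma(nd))^{2}\leq a_{n}^{2}-(nd)^{2}$ already presupposes $b_{\gamma}(g^{n}x_{0})=-nd$, so for the existence half the tracking estimate should be taken directly from \cite{KM99} (as the paper does) rather than re-derived from the comparison triangle, whereas in the uniqueness half, where the invariance of $h'$ is hypothesized, your use of that bound is legitimate.
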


\begin{proof}
First, it is known from \cite{BGS85}, but also reproved here in a
different way that $\tau_{g}=d_{g}$. Therefore $\tau_{g}>0$, and
a special case of the main result in \cite{KM99} then shows that
$g^{n}x_{0}$ converges to a visual boundary point $\gamma.$ In fact
the last assertion of the present theorem is proven and which is a
finer notion of convergence to the boundary. As shown above, we then
must have $g(\gamma)=\gamma$. This bordification is equivalent to
Gromov's horofunction bordification, thus $g$ fixes $h$, the metric
functional (which in this case is a Busemann horofunction $b_{\gamma}$)
associated to the geodesic ray emanating from $x_{0}$ representing
$\gamma$. As shown in \cite{KL11} the last assertion in the theorem
is equivalent to 
\[
-\frac{1}{n}h(g^{n}x_{0})\rightarrow\tau_{g}
\]
for $h=b_{\gamma}.$ Comparing Theorem \ref{thm:main} this is the
metric functional in that statement so there is an equality (which
actually is clear from remarks in section 2).
\end{proof}
Note that as alluded to above, shown for example in \cite{BH99},
in the the case of $\inf_{x}d(x,gx)=0$, there may not exist any fixed
point in the visual bordification. In case of CAT(0) cube complexes
there is however a more precise version (no parabolic isometries)
available due to Haglund \cite{H07}.

Some spaces are not complete but still have a conical bicombing, for
example the Weil-Petersson metric on the Teichmüller spaces since
it is geodesically convex with nonpositive curvature.

\textbf{\noun{Injective spaces}} The visual compactification was defined
in detail in \cite{DL15} (although since the choice of bicombing
is not unique it is apparently unclear how unique the visual compactification
is, while the metric compactification of course is unique). We need
to justify the second assertion of Corollary \ref{cor:injective}:
From Theorem \ref{thm:main} we have that either there a point $y\in X$
that is fixed be the isometry, or the isometry has unbounded orbits
in which case the small argument above for CAT(0)-spaces applies,
since any sequence staying on bounded distance from a convergent unbounded
sequence must converge to the same point as follows from the basic
properties of the visual compactification as in \cite{DL15}. This
proves that any isometry of a proper injective metric space has a
fixed point in the visual compactification.

\textbf{\noun{The $\ell^{\infty}$-spaces}} The $\ell^{\infty}$-space
is one of the standard injective spaces. Let us first consider it
on a finite set $X:=\ell^{\infty}(\left\{ 1,2,...,N\right\} =C(\left\{ 1,2,...,N\right\} )$.
In this case the visual compactification (as defined from the bicombing
of lines) is not equivalent to the metric one. To see this, first
notice that the rays can be represented by a unit vector $v$ and
$\gamma(t)=tv.$ Now for the metric functionals, which are already
known from papers by Walsh, Gutiérrez and others. Take $x_{0}=0$.
Since we can argue with sequences we take a sequence $y_{n}$ such
that $\left\Vert y_{n}\right\Vert \rightarrow\infty.$ By taking subsequences
we may divide the index set into $A$ and $B$ such that the coordinates
$y_{n}^{i}\geq0$ for all $n$ and $i\in A$ and $y_{n}^{j}<0$ for
all $n$ and $j\in B$. Moreover we can assume that the following
limits exist 
\[
y_{n}^{i}-\left\Vert y_{n}\right\Vert \rightarrow a^{i}\in[-\infty,0]
\]
for $i\in A$ and for $j\in B$ 
\[
-y_{n}^{j}-\left\Vert y_{n}\right\Vert \rightarrow b^{j}\in[-\infty,0].
\]
Denote by $A_{f}$ and $B_{f}$ those indices with a finite limiting
value. Since the norm needs to be realized on some coordinate we have
that $A_{f}\cup B_{f}$ is non-empty, indeed at least one of these
values is $0$. The corresponding metric functional which is the limit
of $h_{y_{n}}$ is now
\[
h(x)=\max\left\{ \sup_{i\in A_{f}}a^{i}-x^{i},\sup_{j\in B_{f}}b^{j}+x^{j}\right\} .
\]
These are the metric functionals of $X$ in addition to $h_{y}$.
For a ray $y_{n}=nv$ defined by a unit vector $v$ notice that $A_{f}=\left\{ i:v^{i}=1\right\} $
and $B_{f}=\left\{ j:v^{j}=-1\right\} .$ This means that the other
coordinates of $v$ play no role for its Busemann function. This shows
the difference between the two compactifications.

Determining the metric compactification of $\ell^{\infty}$ in infinite
dimensions seems not yet have been done, although there is an interesting
paper by Walsh \cite{W18} determining the so-called Busemann points
for Banach spaces (which lead him to a direct alternative proof of
the Mazur-Ulam theorem). Maybe one should keep in mind that the determination
of the linear dual space of $\ell^{\infty}$ in functional analysis
leads to signed finitely additive measures. 

Here is one interesting observation, which is in contrast to Hilbert
spaces but similar to a phenomenon Gutiérrez discovered for $\ell^{1}$
spaces \cite{Gu19} (also true for hyperbolic spaces \cite{MT18,C20,BCFS22}).
Actually, as Lytchak pointed out to me, this feature was established
for CAT(0)-spaces with finite telescoping dimension already in \cite[Lemma 4.9]{CL10}.
\begin{prop}
\label{prop:Linfinity}For $\ell^{\infty}$-spaces the linear functional
$f\equiv0$ is not a metric functional. In fact all metric functionals
are unbounded functions.
\end{prop}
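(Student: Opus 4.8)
The plan is to exhibit, for every metric functional $h$, a simple one-parameter family of test points on which $h$ is forced to take arbitrarily large values; boundedness of the candidate $f\equiv 0$ then fails immediately, and unboundedness of \emph{all} metric functionals follows at once from the same identity.

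Fix the base point $x_0 = 0$, so that every metric functional is a (net-)limit in the pointwise topology of functions of the form $h_y(x) = \|x-y\|_\infty - \|y\|_\infty$. Let $\mathbf 1 = (1,1,\dots)$ denote the all-ones vector, which lies in $\ell^\infty$ with $\|\mathbf 1\|_\infty = 1$, and I would use the antipodal pair of test points $t\mathbf 1$ and $-t\mathbf 1$ for $t>0$. The key computation is the elementary scalar identity $\max(|t-s|,|t+s|) = t + |s|$, valid for every real $s$ and every $t \ge 0$. Applying it coordinatewise with $s = y_k$ and taking the supremum over $k$ (using that $\max$ commutes with $\sup$) gives
\[
\max\bigl(\|t\mathbf 1 - y\|_\infty,\ \|t\mathbf 1 + y\|_\infty\bigr) = \sup_k \max\bigl(|t - y_k|,\,|t+y_k|\bigr) = t + \|y\|_\infty,
\]
and hence, after subtracting $\|y\|_\infty$,
\[
\max\bigl(h_y(t\mathbf 1),\ h_y(-t\mathbf 1)\bigr) = t \qquad \text{for every } y \in \ell^\infty \text{ and every } t>0 .
\]

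Because this equality is uniform in $y$ and $\max\colon\mathbb R^2\to\mathbb R$ is continuous, I would pass to the limit defining $h$ and conclude that every metric functional satisfies $\max(h(t\mathbf 1),\,h(-t\mathbf 1)) = t$ for all $t>0$. The proposition is then immediate: letting $t\to\infty$ shows $\sup_x h(x) = \infty$, so $h$ is unbounded; and in particular the bounded function $f\equiv 0$, for which the left-hand side is $\max(0,0)=0\neq t$, cannot be a metric functional.

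The two points that look like they need care turn out to be harmless. First, the supremum $\|y\|_\infty$ need not be attained, but the coordinatewise identity above already takes a supremum over coordinates, so attainment is never invoked. Second, since $\ell^\infty$ is non-separable a limit functional may only be reached along a net rather than a sequence, but passing to the net limit uses nothing beyond continuity of $\max$, as $h(t\mathbf 1)$ and $h(-t\mathbf 1)$ are by definition the pointwise limits of $h_{y_\alpha}(t\mathbf 1)$ and $h_{y_\alpha}(-t\mathbf 1)$. The one genuine idea, and the step I expect to be the real crux, is the choice of the fixed antipodal test points $\pm t\mathbf 1$: a naive attempt to make $h$ large by selecting a point close to $-y_\alpha$ fails, since the test point must not depend on $\alpha$, whereas the symmetric pair $\pm t\mathbf 1$ captures the full sup-norm of $y_\alpha$ no matter which coordinate, and which sign, realizes it.
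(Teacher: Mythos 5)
Your proof is correct and uses essentially the same device as the paper: evaluating at the antipodal constant vectors $\pm c\mathbf{1}$, which is exactly the pair of test points ($z^{s}=c$ and $x^{s}=-c$) used in the paper's argument. The only difference is cosmetic but pleasant: your exact identity $\max\bigl(h_{y}(t\mathbf{1}),h_{y}(-t\mathbf{1})\bigr)=t$, valid for \emph{every} $y$, lets you pass to the closed set $\{(a,b):\max(a,b)=t\}$ in one step and thereby dispenses with the paper's case split into bounded sets, the set $A$, and its complement.
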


\begin{proof}
Let $S$ be a set and consider $X=\ell^{\infty}(S)$. The finite dimensional
case was treated already. Any accumulation point of a bounded set
cannot be identically $0$ by evaluating on a point $x$ which lies
outside a ball centered at $0$ containing the bounded set, showing
that they all are unbounded functions. 

For the case of accumulation points of unbounded sets of metric functionals,
consider the set $C$ of $y\in X$ with $\left\Vert y\right\Vert >2.$
Take the subset $A$ of $X$ for which there is a coordinate $y^{s_{0}}$
such that $y^{s_{0}}>\left\Vert y\right\Vert -1/2>0.$ In the complement
of $A$ in $C$ there is instead a coordinate such that $-y^{s_{0}}>\left\Vert y\right\Vert -1/2>0.$ 

Look at $x\in X$ such that $-c$ for every $s\in S$, and some $c>1$.
Then for any $y\in A$ 
\[
h_{y}(x)=\sup_{s}|y^{s}-x^{s}|-\left\Vert y\right\Vert \geq y^{s_{0}}-(-c)-\left\Vert y\right\Vert >-1/2+c>1/2.
\]
For $y$ in the complement of $A$ in $C$ we instead look at $z\in X$
such that $z^{s}=c$ for all $s$ , some $c>1$, and have similarly
\[
h_{y}(z)=\sup_{s}|y^{s}-z^{s}|-\left\Vert y\right\Vert \geq-y^{s_{0}}+c-\left\Vert y\right\Vert >-1/2+c>1/2.
\]
Note that for any metric functional $|h(z)|\leq d(0,z)=1$. The two
sets $\left\{ f:f(x)<1/2\right\} $ and $\left\{ f:f(z)<1/2\right\} $
are neighborhoods of $f\equiv0$ in $\mathrm{Hom}(X,\mathbb{R})$.
The above two inequalities show that every cluster point of $\Phi(C)$
stays outside the intersection of these two neighborhoods, thus the
functional identically equal to $0$ is not in $\overline{X}$. Indeed,
we see that each $h$ takes values larger than $c-1/2$ but $c$ is
arbitrarily large, hence all metric functionals are unbounded.
\end{proof}

\section{The space $\mathrm{Pos}$}

Let $H$ be a Hilbert space and $\mathrm{Pos}=\mathrm{Pos}(H)$ the
set of positive definite symmetric (self-adjoint) bounded linear operators
on $H.$ This is an open cone in the Banach space of all symmetric
operators equipped with the operator norm (which by the spectral theorem
equals the spectral radius). This is the prototype space for spaces
of metrics discussed above concerning diffeomorphisms and biholomorphisms.
Therefore I expect that some things discussed below will be relevant
for the other contexts.

One can put a complete metric on the space $\mathrm{Pos}$, \emph{the
Thompson metric}, for $p,q\in Pos$
\[
d(p,q)=\sup_{v\in H,\left\Vert v\right\Vert =1}\left|\log\frac{(qv,v)}{(pv,v)}\right|,
\]
which is also a Finsler metric \cite{CPR94}. It is known from this
reference and subsequent papers that it has a weak conical bicombing
which follows from Segal's inequality: 
\[
\left\Vert \exp(u+v)\right\Vert \leq\left\Vert \exp(u/2)\exp(v)\exp(u/2)\right\Vert 
\]
for self-adjoint bounded operators and the operator norm. The bicombing
does not come from the lines in the cone even though these lines are
also geodesics, but instead the distinguished geodesics are the image
of lines in the tangent space $\mathrm{Sym}$ under the exponential
map (note that the exponential map in the sense of algebra and analysis,
coincides for this space with the notion in differential geometry).
We consider $\mathrm{Sym}$ with the distance that the operator norm
gives. The inequality shows that the exponential map is distance preserving
on lines, and metric noncontracting in general \cite{CPR94}. Note
that this discussion centered around the point $x=I$ but since $\mathrm{Pos}$
is a homogeneous space the same applies to any base point. Instead
of considering the full algebra of bounded linear operators one can
do the same constructions for $C^{*}$-algebras, see for example \cite{KM99}. 

Invertible bounded linear operators $g$ act on $\mathrm{Pos}$ by
isometry $p\mapsto gpg^{*}.$ This action is transitive. If $g$ fixes
a point $p$ it means it is a unitary operator in the corresponding
scalar product.

In finite dimensions there are several studies about the metric compactification
in the literature, in the standard Riemannian metric (of less interest
for us) \cite{BH99} and for various Finsler metrics \cite{KL18,HSW18,L21}.
For the application we have in mind, we need to go to infinite dimensions,
in this setting there is basically only \cite{W18}.

Here we now establish the following, which remedies a vexing property
of weak topologies in general in functional analysis that a weak limit
could just be $0$:
\begin{prop}
\label{prop:Posfunctional}The function identically $0$ is not a
metric functional of $\mathrm{Pos}$ and thus every metric functional
is nonconstant, in fact unbounded.
\end{prop}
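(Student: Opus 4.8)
The plan is to adapt the $\ell^{\infty}$ argument of Proposition \ref{prop:Linfinity}: the essential point is that the scalar operators $e^{c}I$ probe the two directions in which a point of $\mathrm{Pos}$ can escape to infinity in the Thompson metric. Take the base point $x_{0}=I$, so that a metric functional is a pointwise limit of functions $h_{y}(p)=d(p,y)-d(I,y)$, and restrict to the component on which $d$ is finite, i.e.\ operators with $0<m_{y}\le M_{y}<\infty$, where $m_{y}=\inf\mathrm{spec}(y)$ and $M_{y}=\sup\mathrm{spec}(y)$. Since for positive $y$ one has $\sup_{\|v\|=1}(yv,v)=M_{y}$ and $\inf_{\|v\|=1}(yv,v)=m_{y}$, the metric formula gives, writing $a=\log m_{y}\le b=\log M_{y}$, the identity $d(I,y)=\sup_{\|v\|=1}|\log(yv,v)|=\max(-a,b)$.

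First I would record two one-sided estimates obtained simply by dropping the absolute value inside the defining suprema:
\[
d(e^{-c}I,y)=\sup_{\|v\|=1}\bigl|\log(yv,v)+c\bigr|\ge b+c,\qquad d(e^{c}I,y)=\sup_{\|v\|=1}\bigl|\log(yv,v)-c\bigr|\ge c-a,
\]
for every $c>0$. Feeding these into $h_{y}$ and splitting on which term realizes $d(I,y)=\max(-a,b)$ yields a uniform bound: if $b\ge -a$ then $h_{y}(e^{-c}I)\ge(b+c)-b=c$, while if $-a\ge b$ then $h_{y}(e^{c}I)\ge(c-a)-(-a)=c$. In either case
\[
\max\{h_{y}(e^{-c}I),\,h_{y}(e^{c}I)\}\ge c\qquad\text{for all }y\in\mathrm{Pos}\text{ and all }c>0.
\]

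With this in hand both assertions follow at once. For the first, the set $N=\{f:f(e^{-1}I)<1\text{ and }f(e^{1}I)<1\}$ is a neighborhood of the zero functional in the topology of pointwise convergence, and the displayed bound with $c=1$ shows that every $h_{y}$ has one of its two values $\ge1$, hence lies outside $N$; thus $\Phi(\mathrm{Pos})\cap N=\emptyset$ and $0\notin\overline{\mathrm{Pos}}$. For the second, given any $h\in\overline{\mathrm{Pos}}$ written as a pointwise limit $h=\lim_{\alpha}h_{y_{\alpha}}$, passing the bound to the limit (the maximum of two convergent nets converges to the maximum of the limits) gives $\max\{h(e^{-c}I),h(e^{c}I)\}\ge c$ for every $c$, so $h$ takes values $\ge c$ for arbitrarily large $c$ and is therefore unbounded, and in particular nonconstant.

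The only genuine content is the pair of one-sided estimates together with the dichotomy that produces the single uniform inequality $\max\{\cdots\}\ge c$; I expect this to be the crux, since once it is established the topological conclusion is immediate and needs no case distinction between bounded and unbounded generating nets. The supporting facts---that the extreme values of the numerical range of a positive operator are $m_{y}$ and $M_{y}$, and that $|h_{y}(e^{\pm c}I)|\le d(I,e^{\pm c}I)=c$ (so in fact the dominant value equals $c$)---are routine.
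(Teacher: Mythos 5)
Your proof is correct. It shares the paper's core idea --- probe a candidate functional against the scalar operators $e^{\pm c}I$ and split according to whether the top or the bottom of the spectrum of $y$ realizes $d(I,y)$ --- but the execution is genuinely different and somewhat cleaner. The paper writes $p=\exp(Y)$, invokes the noncontracting property of the exponential map (Segal's inequality) to get $h_{p}(x)\geq\left\Vert \log x-Y\right\Vert -\left\Vert Y\right\Vert $, picks a near-norming unit vector for $Y$, and treats bounded and unbounded families of $h_{p}$ by separate arguments. You instead compute the Thompson distance from $y$ to a scalar operator exactly from the defining formula, $d(e^{\mp c}I,y)\geq\pm\log(\text{extreme of the numerical range})+c$, and obtain the single uniform inequality $\max\{h_{y}(e^{-c}I),h_{y}(e^{c}I)\}\geq c$ valid for \emph{every} $y\in\mathrm{Pos}$ and every $c>0$. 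This buys you two things: no appeal to the exponential-map comparison at all, and no case distinction between bounded and unbounded generating nets --- both the exclusion of the zero functional and the unboundedness of every metric functional drop out of the same inequality by passing to pointwise limits. The paper's route, on the other hand, is the one that generalizes when one wants to compare $\mathrm{Pos}$ with the flat space $(\mathrm{Sym},\left\Vert \cdot\right\Vert )$ for other purposes (as it does elsewhere in the section), so the extra machinery there is not wasted. All the individual steps you use (the identities $\sup_{\|v\|=1}(yv,v)=M_{y}$, $\inf_{\|v\|=1}(yv,v)=m_{y}$, $d(I,y)=\max(-\log m_{y},\log M_{y})$, and the persistence of the bound under pointwise limits of nets) are sound.
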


\begin{proof}
First note that since $h(I)=0$ for any metric functional $h$, being
constant is the same as being $0$ everywhere. (Here we are taking
the identity operator to be the base point of $\mathrm{Pos}$.) 

Secondly, any cluster point of a bounded set of $h_{p}$ is clearly
an unbounded function: since taking $q$ with large norm much bigger
than the radius of the ball around $I$ that contains the set of $p$
under consideration, then
\[
h_{p}(q)=d(q,p)-d(I,p)\geq d(q,I)-2d(I,p)\gg0,
\]
and it shows the function is unbounded in $q$. (This applies to any
metric space.) 

We now consider the set of $p$ outside the ball of a certain sufficiently
large radius. We will show that this does not intersect the following
neighborhood of $0$ in $\mathrm{Hom}(\mathrm{Pos},\mathbb{R})$:
\[
N(0)=\left\{ f:\left|f(a)\right|<1/2\textrm{ and \ensuremath{\left|f(b)\right|<1/2}}\right\} 
\]
where $a=\exp(I)$ and $b=\exp(-I)$. 

To see this take $p=\exp(Y)\in\mathrm{Pos}$ with $\left\Vert Y\right\Vert >C>2$.
Consider:
\[
h_{p}(x)=d(x,p)-d(I,p)
\]
which is greater or equal to $h_{Y}(\log x)=\left\Vert \log x-Y\right\Vert -\left\Vert Y\right\Vert $
by the metric properties of the exponential map recalled above. For
a constant $\epsilon$ with $0<\epsilon<1/2,$ we take a unit vector
$v$ in $H$ such that $\left|(Yv,v)\right|>\left\Vert Y\right\Vert -\epsilon$
in view of a standard fact of symmetric operators. Now, in the case
$(Yv,v)>\left\Vert Y\right\Vert -\epsilon,$ we thus have

\[
h_{p}(b)\geq h_{Y}(-I)=\left\Vert -I-Y\right\Vert -\left\Vert Y\right\Vert \geq\left|(-Iv-Yv,v)\right|-\left\Vert Y\right\Vert =\left|-1-(Yv,v)\right|-\left\Vert Y\right\Vert 
\]
and assuming $C>1+\epsilon$ this expression equals
\[
1+(Yv,v)-\left\Vert Y\right\Vert \geq1-\epsilon>1/2.
\]
In the opposite case $-(Yv,v)>\left\Vert Y\right\Vert -\epsilon$,
we instead look at 
\[
h_{p}(a)\geq h_{Y}(I)=\left\Vert I-Y\right\Vert -\left\Vert Y\right\Vert \geq\left|(Iv-Yv,v)\right|-\left\Vert Y\right\Vert =\left|1-(Yv,v)\right|-\left\Vert Y\right\Vert 
\]
and assuming $C>1+\epsilon$ this expression equals
\[
1-(Yv,v)-\left\Vert Y\right\Vert \geq1-\epsilon>1/2.
\]
 In total this shows that the set of all functionals $h_{p}$ with
$\left\Vert p\right\Vert >\exp(2),$ does not intersect $N(0).$ It
follows that $f\equiv0$ cannot be a cluster point of the set $\Phi(\mathrm{Pos})$
and thus not a metric functional.

Similarly to the case with $\ell^{\infty}$ one sees that we can conclude
that every metric functional of this space is not only nonconstant
but in fact unbounded.
\end{proof}

\section{Application to bounded linear operators}

Recall first that we already noticed one application to norm-preserving
linear operators, showing a new result that defies the standard counterexamples,
and strong enough to deduce the most classical version of the mean
ergodic theorem.

The Invariant Subspace Problem (ISP) asks: does every bounded linear
operator $T$ of a separable complex Hilbert space of dimension at
least two, admit a nontrivial invariant closed subspace? This is a
famous and longstanding open problem, see for example the book \cite{CP11}
devoted to it. For the purpose of investigating invariant linear subspaces,
we may add a multiple of the identity map or multiply the operator
by a scalar, since these operation do not change any invariant subspace.
Therefore we may assume that the operator $g=T$ is invertible.

Note that by applying the operator $g$ iteratively on the basepoint
$x_{0}=Id$ we get 
\[
\tau(g)=\lim_{n\rightarrow\infty}\left|\log\left\Vert g^{n}g^{*n}\right\Vert ^{1/n}\right|
\]
this is semipositive and finite (since $g$ is an invertible bounded
operator). It is $0$ if and only if the spectrum of $g$ is contained
in the unit circle (for example when the operator is unitary). As
remarked in section 2 above $d(g)=\tau(g)$ (note that $d$ makes
sense in operator theory but is an invariant perhaps not much considered).
In view of Proposition \ref{prop:Posfunctional} we thus get from
Theorem \ref{thm:main} a nontrivial metric functional $h$ of $\mathrm{Pos}$
which is fixed by $g$
\[
g(h)=h.
\]

This proves Theorem \ref{thm:ISP}. It is clearly a statement in the
direction of the ISP, but how does it actually relate to closed invariant
subspaces? As already remarked when $g$ fixes a point inside $\mathrm{Pos}$
it is unitary and thus the spectral theorem applies and settles the
issue of invariant subspaces. If it fixes a point $s$ in the natural
boundary of $\mathrm{Pos}$ consisting of bounded semipositive operators,
then $g^{*}$ must preserve its kernel. Indeed, take $v\in\ker s$
then
\[
0=gs(v)=s(g^{*}v)
\]
showing that $g^{*}(\ker s)\subseteq\ker s.$ This is a closed subspace
and its closed orthogonal complement is invariant under $g$ and is
a nontrivial subspace. In general, one conceivable path forward is
to refine the theorem in this special case, and deduce that the metric
functional must be a Busemann point, and that as such it has an associated
closed linear subspace that must be invariant.

Another discussion of the invariant subspace problem from a metric
perspective without the space $\mathrm{Pos}$ can be found in \cite{GuK21}
which establishes some cases when there exist metric or linear functionals
$f$ of the Banach space such that $f(g^{n}(0))\leq0$ for all $n>0$
or $f(g(x))\leq f(x)$ for all vectors $x$.

\lyxaddress{Section de mathématiques, Université de Genève, Case postale 64,
1211 Genève, Switzerland; Mathematics department, Uppsala University,
Box 256, 751 05 Uppsala, Sweden.}
\end{document}